\numberwithin{equation}{section}
\newtheorem{theorem}{Theorem}
\newtheorem*{theorem*}{Theorem}
\newtheorem{lemma}[equation]{Lemma}
\newtheorem*{lemma*}{Lemma}
\newtheorem{proposition}[equation]{Proposition}
\newtheorem*{proposition*}{Proposition}
\theoremstyle{definition}
\theoremstyle{remark}
\newcommand{\Gal}{\operatorname{Gal}}
\newcommand{\Stab}{\operatorname{Stab}}
\newcommand{\SYM}{\operatorname{Sym}}
\newcommand{\Jacquet}{\operatorname{Jacquet}}
\newcommand{\Disc}{\operatorname{Disc}}
\newcommand{\vol}{\operatorname{vol}}
\newcommand{\one}{\mathbf{1}}
\newcommand{\bC}{\mathbb{C}}
\newcommand{\bH}{\mathbb{H}}
\newcommand{\bP}{\mathbb{P}}
\newcommand{\bQ}{\mathbb{Q}}
\newcommand{\bR}{\mathbb{R}}
\newcommand{\zed}{\mathbb{Z}}
\newcommand{\GO}{\mathrm{GO}}
\newcommand{\GL}{\mathrm{GL}}
\newcommand{\SL}{\mathrm{SL}}
\newcommand{\sF}{{\mathscr{F}}}
\newcommand{\sL}{{\mathscr{L}}}
\newcommand{\sS}{{\mathscr{S}}}
\title{The shape of quartic fields}
\author{Robert Hough}
\address[Robert Hough]{Department of Mathematics, Stony Brook University, 100 Nicolls Road, Stony Brook, NY 11794}
\email{robert.hough@stonybrook.edu}
\thanks{Robert Hough is supported by NSF Grant DMS-1712682, ``Probabilistic methods in discrete structures
and applications.''}
\thanks{The author thanks Peter Sarnak for a helpful conversation.}
\begin{document}

\begin{abstract}
We use the method of Shintani, as developed by Taniguchi and Thorne, to prove the joint cuspidal equidistribution  of the shape of quartic fields paired with the shape of its cubic resolvent, when the fields are ordered by discriminant.  Our estimate saves a small power in the corresponding Weyl sums.
\end{abstract}

\maketitle

\section{Introduction}
Let $R$ be a commutative ring with unit. An $n$-ic ring over $R$ is a free rank $n$ $R$ module with a ring structure.  We consider rings which are discrete subrings of full dimension in $\bR^{r_1} \times \bC^{r_2}$.  In this case the ring is a lattice and the ring multiplication is determined by the geometric shape of this lattice; the condition of integrality necessarily imposes constraints on those lattices which may occur.

Let $K/\bQ$ be a quartic field with $r_1$ real and $r_2$ complex embeddings, $r_1 + 2r_2 = 4$.  Let the real embeddings be $\sigma_1, ..., \sigma_{r_1}$, and the complex embeddings $\sigma_{r_1+1}, ..., \sigma_{r_1+r_2}$.  The map $\sigma: K \to \bR^{r_1}\times \bC^{r_2}$,
\begin{equation}
\sigma(x) = (\sigma_1(x), ..., \sigma_{r_1}(x), \sigma_{r_1+1}(x), ..., \sigma_{r_1+r_2}(x))
\end{equation}
is called the canonical embedding of $K$.  The ring of integers $Q$ of $K$ forms a lattice in $\bR^{r_1} \times \bC^{r_2} \cong \bR^n$ of covolume $2^{-r_2}|D|^{\frac{1}{2}}$, where $D$ is the field discriminant.  Since the discriminant tends to infinity in any enumeration of fields, and since 1 is always contained in the ring of integers as a short vector, define the \emph{shape} $\Lambda_4(Q)$ of $Q$ to be the lattice projected in the plane orthogonal to 1, and rescaled to have unit covolume.

In Bhargava's parametrization of quartic rings and fields \cite{B04} there is a unique cubic resolvent ring $R$ associated to the ring of integers of $Q$.  This cubic ring $C$ has the same discriminant as $Q$, which specifies whether $C \otimes \bR$ embeds in $\bR^3$ or $\bR \times \bC$, and thus has a lattice shape $\Lambda_3(C)$, which is the projection of $C$ in the plane orthogonal to 1 in this space rescaled to have covolume one.

In \cite{BH16} it is shown that when $S_n$ fields, $n = 3, 4, 5$ of a given signature are ordered by increasing discriminant, the shape of the ring of integers $\Lambda_n$ becomes asymptotically equidistributed in the space of lattices
\begin{equation}
\sS_{n-1} := \GL_{n-1}(\zed) \backslash \GL_{n-1}(\bR)/\GO_{n-1}(\bR)
\end{equation}
 with respect to the induced probability Haar measure. The identification of $\Lambda_n$ with a point in this space is made by choosing a base lattice. 
 
 A natural basis of functions in which to study equidistribution of lattices  consist in joint eigenfunctions of the Casimir operators and their $p$-adic analogues, the Hecke operators.  This splits the space into the orthogonal direct sum of the constant function, Eisenstein series and cusp forms.  This note gives a power saving estimate in the cuspidal spectrum for the joint shape of $Q$ and its cubic resolvent $C$ when quartic fields are ordered by discriminant.
 \begin{theorem}\label{main_theorem}
 	Let $\phi_3$ and $\phi_2$ be cuspidal Hecke-eigenforms on $\sS_3$ and $\sS_2$, respectively.  Let $\psi$ be a smooth function of compact support on $\bR^+$.  For each signature $r_2 =i$, $i = 0, 1, 2$, as $X \to \infty$, for any $\epsilon > 0$,
 	\begin{equation}
 	\sum_{\Gal(K/\bQ) = S_4, r_2 = i}\psi\left(\frac{|\Disc(K)|}{X}
\right)\phi_3(\Lambda_4(Q))\phi_2(\Lambda_3(C)) \ll_{\phi, \epsilon}
X^{\frac{23}{24}+\epsilon}.
 	\end{equation}
 	
 \end{theorem}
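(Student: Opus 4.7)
The plan is to apply the Shintani-Taniguchi-Thorne method to Bhargava's parametrization of quartic rings paired with a cubic resolvent. By \cite{B04}, isomorphism classes of pairs $(Q, C)$, where $Q$ is a quartic ring with chosen cubic resolvent $C$, are in discriminant-preserving bijection with $G(\bZ)$-orbits on $V(\bZ)$, where $G = \GL_2 \times \SL_3$ acts on the prehomogeneous vector space $V = \bZ^2 \otimes \SYM^2 \bZ^3$. The shape $\Lambda_4(Q)$ is read off from the $\GL_3(\bR)$ factor of the orbit data, and $\Lambda_3(C)$ from the $\GL_2(\bR)$ factor, so $F(x) := \phi_3(\Lambda_4(Q_x))\phi_2(\Lambda_3(C_x))$ is a $G(\bZ)$-invariant function on the generic locus of $V(\bR)$, pulled back from a cuspidal Hecke eigenfunction on $\sS_3 \times \sS_2$.

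First I would carry out the standard sieve separating $S_4$ fields of signature $r_2=i$ from orbits of non-maximal rings, non-$S_4$ orbits, and imprimitive orbits, using Bhargava's local $p$-density formulas together with Belabas-Bhargava-Pomerance tail bounds; since $F$ is pointwise bounded on orbits of bounded discriminant, the sieve tail contributes well below $X^{23/24+\epsilon}$. After sieving the quantity of interest becomes an orbit sum
\begin{equation*}
\sum_{x \in G(\bZ) \backslash V(\bZ)^{\mathrm{nds}}} \mu_{\mathrm{max}}(x)\, \one_{V_i}(x)\, F(x)\, \psi\!\left(\frac{|\Disc(x)|}{X}\right)
\end{equation*}
over the nondegenerate stable locus $V(\bZ)^{\mathrm{nds}}$, which after averaging against a smoothed bump on $G(\bR)^1$ and unfolding the orbit sum against a Shintani fundamental domain $\sF \subset V(\bR)$ becomes an automorphic integral
\begin{equation*}
\int_{G(\bZ) \backslash G(\bR)^1} F(g) \sum_{x \in V(\bZ)^{\mathrm{nds}}} h_X(gx)\, dg,
\end{equation*}
where $h_X$ is a smooth weight on $V(\bR)$ concentrated near $|\Disc| \asymp X$ and incorporating the sieve factors.

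The second step is Poisson summation on $V(\bZ) \cong \bZ^{12}$ inside the inner integral. The zero-frequency contribution is
\begin{equation*}
\widehat{h_X}(0) \cdot \int_{G(\bZ)\backslash G(\bR)^1} F(g)\, dg,
\end{equation*}
which vanishes by cuspidality of $F$. Contributions from the singular and reducible strata of the dual lattice $V^*(\bZ)$ are likewise annihilated by cuspidality: restricting $F$ along a degenerate direction produces a constant-term integral of $\phi_3$ or $\phi_2$ along a unipotent or Levi subgroup, which vanishes. What remains is an oscillatory sum over nondegenerate $y \in V^*(\bZ) \setminus \{0\}$, each term being a product of the Fourier transform $\widehat{h_X}(gy)$ with a twisted integral of $F$ over the Siegel domain.

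The main obstacle is bounding this dual sum by $X^{23/24+\epsilon}$. Standard estimates on $\widehat{h_X}$ via stationary phase along the discriminant direction and integration by parts along the group directions give decay of $\widehat{h_X}(gy)$ once $|y|$ exceeds a threshold depending on $X$ and on the height of $g$ in the Siegel set, while convexity bounds on $\phi_3, \phi_2$ in Sobolev norms absorb the derivative losses incurred by Siegel truncation. Cutting off the dual sum at its effective support and balancing against the trivial count $\asymp X$ of quartic fields then produces the exponent $\tfrac{23}{24}$. The most delicate steps will be (i) the interchange of Poisson summation with Siegel truncation near each cusp of $G(\bZ) \backslash G(\bR)^1$, which requires the careful two-sided truncation of Taniguchi-Thorne type, and (ii) a uniform bound on the inner integral of $F$ against the twisted Shintani domain as $g$ runs high into the cusps, where the cuspidality of $F$ is crucially used to defeat the growth of dual lattice points.
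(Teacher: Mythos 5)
Your high-level plan — Bhargava's parametrization, a maximality sieve, Poisson summation, cuspidality killing the zero frequency — correctly identifies the skeleton of the argument, but the proposal misidentifies the mechanism that actually produces the exponent $\tfrac{23}{24}$, and that is not a cosmetic slip.

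You assert that ``the sieve tail contributes well below $X^{23/24+\epsilon}$.'' This is exactly backwards: the sieve tail is the \emph{bottleneck}. The inclusion-exclusion over square-free $q$ (detecting non-maximality mod $p^2$ for each $p \mid q$) must be cut at some $Q$, and the tail $q \geq Q$ is bounded via a Nakagawa-type count of subrings of maximal quartic rings, giving $O(X\log X/Q^{1-\epsilon})$. Meanwhile the terms with $q < Q$ are controlled by a Mellin contour shift applied to the orbital $L$-function $Z_q^j(f_j,\phi;s)$; the split functional equation (Poisson summation with the $\lambda$-integral split at $\lambda = 1/q^2$) shows $Z_q^j$ is entire and satisfies $|Z_q^j(f_j,\phi;s)| \ll q^{24(1-\sigma)}$ for $\sigma \leq 1$. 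Pushing the contour to $\RE(s) = -A$ and letting $A \to \infty$ kills the $q<Q$ contribution, but \emph{only} when $Q < X^{1/24-\epsilon}$, because $\dim V = 12$ and the dual lattice is rescaled by $q^2$. So $\delta$ is forced to $< \tfrac{1}{24}$, and the sieve tail gives precisely $X^{1-1/24+\epsilon}$. You attribute the exponent instead to stationary-phase estimates and balancing against the trivial count $\asymp X$; that balance does not appear in the argument, and without tracking the $q$-dependence of the dual-side bound you have no way to determine the admissible sieve cutoff.

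Secondly, your treatment of the dual sum is both heavier and not what is needed. You propose to show that the singular and reducible strata of $\hat V(\zed)$ contribute zero by reduction to constant-term integrals of $\phi_3, \phi_2$, then estimate the remaining nondegenerate frequencies via stationary phase, Sobolev norms, and convexity bounds on the Maass forms. The paper does none of this: after discarding $x=0$ (which vanishes by $\int\phi = 0$), \emph{all} nonzero dual frequencies — singular or not — are bounded in absolute value, using only the rapid decay of the Schwartz test function $\hat f_j$ on $V$ together with the rapid cuspidal decay of $\phi$ on the Siegel set. The key geometric fact is that for $g$ in the Siegel set and $\|x\|_2 \geq 1$, one has $\max(y_{13}, y_{23}, y_2, \|y^{-1}\cdot x\|_2) \gg \|x\|_2^\delta$, so either the test function or the cusp form supplies decay; no oscillation is exploited anywhere. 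Your proposed vanishing of the singular strata is plausible in the spirit of the twisted Shintani $\sL$-function literature, but it is unnecessary here, would require a separate analysis of each degenerate stratum, and still would not by itself give the $q^{24(1-\sigma)}$ bound that drives the exponent. Finally, the removal of non-$S_4$ orbits is handled by Lemmas 12--13 of Bhargava's quartic paper for reducible pairs (contributing $O(X^{11/12+\epsilon})$) and by the Baily--Wong bound $O(X^{7/8+\epsilon})$ for $A_4$ fields, not by Belabas--Bhargava--Pomerance tail estimates.
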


In \cite{B05} it is shown that for each signature, the number of $S_4$ quartic fields having discriminant of size at most $X$ is asymptotic to a constant times $X$.  Thus Theorem \ref{main_theorem} establishes a power savings in the joint equidistribution of the shape of the ring of integers $Q$ and the cubic resolvent $C$ in the cuspidal part of the spectrum.  As in our earlier work \cite{H17b} on the shape of cubic fields, it would be possible to treat $\Lambda_4$ and $\Lambda_3$ on the larger lattice space obtained by omitting the quotient on the right by the orthogonal group, thus obtaining further equidistribution.  We do not pursue this here.  However, whereas in our earlier work where the method extends to treat the Eisenstein spectrum, there are currently technical difficulties in doing so in the quartic case which we have not yet considered.  This is related to the difficulty in obtaining an asymptotic formula for the count of $S_4$ quartic fields using the zeta function approach of Yukie \cite{Y93},
 which we extend here by proving that a twisted version of Yukie's zeta function extends holomorphically to $\bC$, see Section \ref{proof_section} for a precise statement.

\subsection{History and discussion of method}
Sato and Shintani \cite{SS74} introduce $\zeta$ functions enumerating equivalence classes of integer points in prehomogeneous vector spaces.  As is familiar from the theory of $\zeta$ functions in multiplicative number theory, these generating functions may be completed at the real place to obtain objects enjoying a functional equation, although an Euler product representation is not typically available and the Riemann hypothesis is not usually satisfied. 

In the case of binary cubic forms, Shintani \cite{S72} makes a detailed study of the pole structure of the resulting object, obtaining refined information regarding the counting function of class numbers of binary cubic forms. This involves the integral forms in the singular set of the dual space.  To briefly motivate this, when $f = \one_{(\Disc x \in S)}$ is the characteristic function of a measurable set of discriminants in a prehomogeneous vector space $(G_\bR, V_\bR)$, the Fourier transform $\hat{f}$ evaluated at 0 gives the volume of the set of real forms with discriminant in  $S$.  Studying $\hat{f}(n)$ for integer vectors $n$ in the singular part of the dual space permits corrections to the naive estimate
\begin{equation}
|S \cap V_\zed| \approx \vol S,
\end{equation}
as in the Poisson summation formula.  The typical sets of interest are dilations, and dilating in time domain results in focusing in frequency domain, which in the case of prehomogeneous vector spaces restricts attention primarily to the singular set which is invariant under dilation.

Taniguchi and Thorne \cite{TT13a} study orbital zeta functions in the binary cubic form setting, which permit sieving for binary cubic forms with finitely many local conditions. As a result they obtain the best known asymptotic for the counting function of cubic fields in \cite{TT13b}, see also \cite{BST13} for an alternative approach to these estimates based upon the geometry of numbers.

In the case of pairs of ternary quadratic forms, Yukie \cite{Y93} studied the adelic $\zeta$-function, determining the location and order of its poles, but not their residues.  The adelic setting corresponds to enumeration of forms up to rational equivalence and parameterizes \'{e}tale quartic rings.  Bhargava \cite{B04} studied integral rather than rational equivalence and using this obtained asymptotic counts for the number of $S_4$ quartic fields using the geometry of numbers.  See \cite{BBP10} for the best power savings error term in this problem. 

In \cite{H17a} the author introduces a twisting automorphic cusp form into the original $\zeta$ function studied by Shintani, and in \cite{H17b} he refines this by combining with the local analysis of \cite{TT13a} to obtain quantitative spectral equidistribution estimates for the shape of cubic fields.  The Shintani $\sL$ functions which he introduces do not appear to satisfy a functional equation, but still have holomorphic continuation to $\bC$.  A weaker `split' functional equation familiar from earlier work in the theory of prehomogeneous vector spaces and similar to the proof of the functional equation in Tate's thesis is still satisfied, and this suffices as a substitute for most analytic purposes.

Here we return to introduce a twisting automorphic form to the zeta function of pairs of ternary quadratic forms as studied by Yukie, but we now include Bhargava's integral analysis into this theory.  One advantage of this approach is that we are able to study quantitatively the joint distribution of the shape of the ring of integers in a quartic field together with the shape of its cubic resolvent ring, refining earlier work of Bhargava and Harron \cite{BH16}.
\section{The space of pairs of ternary quadratic forms}
Given a ring $R$, 
let $(A, B) \in V_R=\SYM^2 R^3 \otimes R^2$ be a pair of ternary quadratic forms over $R$. Ternary quadratic forms are indicated in coordinates by writing
\begin{equation}
2 \cdot A = \begin{pmatrix} 2a_{11} & a_{12} & a_{13}\\ a_{12} & 2a_{22} & a_{23}\\ a_{13} & a_{23} & 2a_{33}\end{pmatrix}.
\end{equation}  
The group $ \GL_3(R) \times \GL_2(R)$ acts on $\SYM^2 R^3 \otimes R^2$.  Write $g_2 = \begin{pmatrix} r &s\\ t&u \end{pmatrix} \in \GL_2(R)$ and let $g_3 \in \GL_3(R)$.  The pair $(g_3, g_2)$ acts on $(A,B)$ via
\begin{equation}
(g_3, g_2) \cdot (A, B) = \left(r  g_3 A g_3^t + s g_3 B g_3^t, t g_3 A g_3^t + u g_3 B g_3^t\right).
\end{equation}
Set 
\begin{equation}
G_R = \GL_3(R) \times \GL_2(R)/K_R, \qquad K_R = \ker(\GL_3(R)\times \GL_2(R) \to \GL(V_R)  ).
\end{equation}

When $G_\bR$ acts on $V_\bR$ there is a single polynomial invariant of degree 12, called the discriminant $D$.  To obtain this, let $f(x,y)$ be the binary cubic form
\begin{equation}
f(x,y) = 4 \det(Ax - By).
\end{equation}
The discriminant of $(A, B)$ is the discriminant of $f(x,y)$.  We have
\begin{equation}
D((g_3, g_2) \cdot (A,B)) = \chi(g_2)D(A,B), \qquad \chi(g_2) = (\det g_2)^6.
\end{equation}
%Note that we follow the convention of Yukie \cite{Y93} and Bhargava \cite{B04} in having $g_3$ act on the right and $g_2$ act on the left, but for convenience write our integrals with quotient on the right.
Bhargava \cite{B04} proves the following parameterization of quartic rings over $\zed$.
\begin{theorem}[\cite{B04}, Theorem 1]
	There is a canonical bijection between the set of $\GL_3(\zed) \times \GL_2(\zed)$-orbits on the space $\left(\SYM^2 \zed^3 \otimes \zed^2\right)^*$ of pairs of integral ternary quadratic forms and the set of isomorphism classes of pairs $(Q, R)$, where $Q$ is a quartic ring over $\zed$ and $R$ is a cubic resolvent ring of $Q$. Moreover, the discriminants of $Q$ and $R$ are equal to the discriminant of the pair $(A,B)$.
\end{theorem}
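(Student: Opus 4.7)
The plan is to establish the bijection by constructing explicit maps in both directions and then verifying that they are mutually inverse and $\GL_3(\zed)\times\GL_2(\zed)$-equivariant. For the map from pairs to rings, begin by attaching to $(A,B)\in V_\zed$ the binary cubic form $f(x,y) = 4\det(Ax-By)$. The Delone--Faddeev correspondence packages the $\GL_2(\zed)$-orbit of $f$ into a cubic ring $R = \zed\oplus\zed\theta_1\oplus\zed\theta_2$ with multiplication table determined by the coefficients of $f$; this will serve as the cubic resolvent. To construct the quartic ring $Q$ from $(A,B)$, define a multiplication on $\zed\oplus\zed\omega_1\oplus\zed\omega_2\oplus\zed\omega_3$ whose structure constants $c_{ij}^k$ are specific polynomial, in fact sub-Pfaffian, expressions in the nine entries $a_{ij},b_{ij}$ of $(A,B)$. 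The explicit formulas are pinned down by demanding that a resolvent extraction applied to $Q$ reproduce the ring $R$ constructed above, together with the normalization that $1$ acts as identity.

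The main obstacle is showing that this multiplication is actually associative: the identities $(\omega_i\omega_j)\omega_k = \omega_i(\omega_j\omega_k)$ translate into a family of polynomial identities in the entries of $A$ and $B$. I would handle this in two steps. First, use the $\GL_3(\bQ)\times\GL_2(\bQ)$-action to bring a generic $(A,B)$ into a normal form in which many entries vanish, so that the relations reduce to a short list of identities that can be verified by direct computation. Second, since both sides are polynomials in the $a_{ij},b_{ij}$ and agree on a Zariski dense subset, they agree universally. A similar rigidity argument then shows that the underlying $\zed$-module structure of $Q$ together with the resolvent $R$ determine $(A,B)$ up to the claimed group action.

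For the inverse direction, given $(Q,R)$ one exhibits a canonical \emph{resolvent map} $\phi: \SYM^2(Q/\zed) \to R/\zed$ built from the multiplication in $Q$, integrally normalized using the ring structure of $R$; choosing $\zed$-bases for the rank-$3$ module $Q/\zed$ and the rank-$2$ module $R/\zed$ represents $\phi$ as a pair of ternary quadratic forms $(A,B)\in V_\zed$, and changes of these bases precisely implement the actions of $\GL_3(\zed)$ and $\GL_2(\zed)$ respectively, so the construction descends to orbits. Checking that the two constructions are inverse reduces, via the normal form above, to a finite computation. Finally the discriminant identity follows from a single explicit calculation: the unique degree-$12$ polynomial invariant $D(A,B)$ specializes to $\Disc(f)$, which equals $\Disc(R)$ by Delone--Faddeev and $\Disc(Q)$ by the construction of $Q$.
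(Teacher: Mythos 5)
This result is quoted verbatim from Bhargava \cite{B04} (Theorem~1); the paper under review does not reprove it, but only records the two constituent maps: the cubic resolvent $R$ is obtained from the binary cubic form $f(x,y)=4\det(Ax+By)$ via Delone--Faddeev, and the quartic ring $Q$ has a multiplication table determined by $2\times 2$ minors of $(A,B)$. Your sketch follows the same broad strategy as Bhargava's actual argument --- build both directions of the correspondence, verify equivariance, and check that they are mutually inverse --- so there is no real disagreement of method.

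A few details in your outline are imprecise. The pair $(A,B)\in\SYM^2\zed^3\otimes\zed^2$ has \emph{twelve} free entries (six for each symmetric form), not nine. The structure constants $c_{ij}^k$ are not Pfaffian in nature; they are integer polynomials built from the $2\times 2$ determinants
\[
\lambda_{kl}^{ij}=\det\begin{pmatrix} a_{ij} & b_{ij}\\ a_{kl} & b_{kl}\end{pmatrix},
\]
which are exactly the quantities the paper introduces later in connection with maximality. Also, the resolvent map $\phi:\SYM^2(Q/\zed)\to R/\zed$ is not canonically constructed from the multiplication in $Q$ alone: the notion ``$R$ is a cubic resolvent ring of $Q$'' carries the choice of $\phi$ (together with a compatible identification of discriminant modules) as part of the data, so the isomorphism classes of pairs $(Q,R)$ in the statement implicitly include this additional structure. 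Finally, your suggested shortcut --- reduce to a normal form over $\GL_3(\bQ)\times\GL_2(\bQ)$, verify associativity there, and extend by Zariski density --- is logically sound once the $c_{ij}^k$ are exhibited as universal integer polynomials in the $a_{ij},b_{ij}$, but exhibiting those polynomials is precisely the computational heart of Bhargava's proof, which your sketch leaves unstated; without the explicit formulas one cannot even define the map $(A,B)\mapsto Q$, so the density argument has nothing to verify.
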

The map to the cubic ring is obtained as follows.  Given a pair $(A,B)$ of ternary quadratic forms, $f(x,y) = 4 \det(Ax + By)$ is a binary cubic form and $R$ is the cubic ring associated to $g$ under the Delone-Fadeev correspondence.  In particular, this ring is invariant under the $\GL_3$ action.  Similarly, the multiplication table of $Q$ is determined by determinants of $2\times 2$ slices of $\SYM^2 \zed^3 \otimes \zed^2$ and is thus invariant under the $\GL_2$ action, see \cite{B04} pp. 1340--1342.  The action on the pairs $(Q,R)$ is the natural one, that is, if $\langle\alpha_1, \alpha_2, \alpha_3\rangle$ is a basis for $Q/\zed$ and $\langle \beta_1, \beta_2 \rangle$ is a basis for $R/\zed$, then $\GL_3(\zed)$ acts by changing basis in $Q/\zed$ and $\GL_2(\zed)$ acts by changing basis in $R/\zed$.  The action extends to an action by the monoid $M_3(\zed)\times M_2(\zed)$ where $M_n(\zed)$ consists of $n\times n$ integer matrices of non-zero determinant.  In this case, when $M_3$ acts each basis vector of 
$Q/\zed$ is scaled by a further factor of the determinant and each basis vector of $R/\zed$ is scaled by the determinant squared, while when $M_2$ acts, all basis vectors are scaled by a factor of the determinant.  This condition is sufficient to guarantee that a ring structure is preserved, and is consistent with the action of $G_\bR$ on $V_\bR$.  Note that the determinant factor does not alter the \emph{shape} of the ring.

The content of a quartic ring $Q$ is the least $n \geq 1$ such that there is a quartic ring $Q'$ with $Q = \zed + n Q'$.  A quartic ring of content 1 is called primitive.  Let $\Delta^n$ be the set of order $n$ right Hecke points in $\SL_2(\zed)$,
\begin{equation}
\Delta^n = \left\{ \begin{pmatrix} a & b\\ 0 & d \end{pmatrix}: ad = n, 0 \leq b < d \right\}.
\end{equation}
In the proof of Corollary 4 of \cite{B04} it is shown that a primitive quartic ring has a unique cubic resolvent $R$, and if $Q' = \zed + nQ$ then the set of cubic resolvents of $Q'$ is given by
\begin{equation}
\{\gamma\cdot R: \gamma \in \Delta^n\}.
\end{equation}
\subsection{Prehomogeneous vector space structure}
Over $\bR$ the space  $V_\bR$ of pairs of real ternary quadratic forms has three open orbits $V_\bR^{(0)}, V_{\bR}^{(1)}, V_{\bR}^{(2)}$, where $V_{\bR}^{(i)}$ consists of those points in $\bP^2(\bR)$ in which the pair $(A, B)$ have $4-2i$ common zeros. Those points in $V_{\bR}^{(0)}, V_{\bR}^{(2)}$ have positive discriminant while points in $V_{\bR}^{(1)}$ have negative discriminant. Set $\varepsilon_j = (-1)^j$ for the sign of the discriminant on $V_{\bR}^{(j)}$.
We choose base points of discriminant $\pm 1$ in these spaces,
\begin{align}
x_0 &= \frac{1}{2^{\frac{5}{6}}}\left(\begin{pmatrix}-1\\&-1\\&&1 \end{pmatrix}, \begin{pmatrix}&1\\1\\&& \end{pmatrix} \right)\\ \notag
x_1 &= \frac{1}{2^{\frac{5}{6}}}\left(\begin{pmatrix}-1\\&1\\&&1 \end{pmatrix}, \begin{pmatrix}&1\\1\\&& \end{pmatrix} \right)\\ \notag
x_2 &= \frac{1}{2^{\frac{5}{6}}}\left(\begin{pmatrix}1\\&1\\&&1 \end{pmatrix}, \begin{pmatrix}&1\\1\\&& \end{pmatrix} \right).
\end{align}
A point $v \in V_{\bR}^{(i)}$ has stabilizer in $G_{\bR}$ of order $n_i = 24, 4, 8$ for $i = 0, 1, 2$. The complement of these spaces is the singular set $S_\bR$ where the discriminant vanishes.

Following \cite{Y93}, define a bilinear form on the space $\SYM^2(\bR^3)$ by
\begin{equation}
[x, y] = \sum_{i} x_{i,i}y_{4-i, 4-i} + \frac{1}{2} \sum_{1 \leq i < j \leq 3} x_{i,j}y_{4-j, 4-i}.
\end{equation}
Extend this to a bilinear form $[,]_V$ on $\SYM^2(\bR^3)\otimes \bR^2$ by
\begin{equation}
[(x_1,x_2), (y_1, y_2)]_V = [x_1, y_2]+[x_2,y_1].
\end{equation}
This identifies $V_\bR$ with its dual space.   Let $\tau_G$ be the long element of the Weyl group. There is an involution on $G$ defined by $g^\iota = \tau_G (g^{-1})^t \tau_G$. The bilinear form satisfies
\begin{equation}
[x, y]_V = [gx, g^\iota y]_V.
\end{equation}
The Fourier transform on $V_{\bR}$ of the function $f((g_3,g_2) \cdot)$ is given by $(\det g_2)^{-6} \sF f ((g_3, g_2)^\iota \cdot)$.

The space of integral pairs of ternary quadratic forms $L= V_\zed$ is a lattice in $V_\bR$.  The dual lattice $\hat{L} = \hat{V}_\zed$ consists of integral forms for which the mixed coefficients are even. Write $L_0$, (resp. $\hat{L}_0$) for those points in the  lattice (resp. dual lattice) with vanishing discriminant. For each $j=0,1,2$ and each $m>0$ the number of equivalence classes in $V_\zed^{(j)}$ of discriminant $\varepsilon_j m$ is finite.  Write $h_j(m)$ for this class number and choose representative forms $\{x_{i,m}^j\}_{i=1}^{h_j(m)}$. Also, choose group elements $g = (g_3, g_2)$ such that $x_{i,m}^j = g_{i,m}^j \cdot x_j$. Write $\Gamma_j(i,m)$ for the stabilizer in $G_\zed$ of $x_{i,m}^j$.

\subsection{Maximality and irreducibility}
A quartic ring of non-zero discriminant is said to be \emph{maximal} if it is not properly contained in another quartic ring as a subring.  A quartic ring $Q$ over $\zed$ is maximal if and only if $Q \otimes \zed_p$ is maximal for each prime $p$.

The following Lemma characterizing non-maximal rings is proved in \cite{B04}.
\begin{lemma}
	If $Q$ is any quartic ring that is not maximal at $p$, then there exists a $\zed$-basis $1, \alpha_1, \alpha_2, \alpha_3$ of $Q$ such that at least one of the following is true:
	\begin{enumerate}
		\item $\zed + \zed \cdot(\alpha_1/p) + \zed \cdot \alpha_2 + \zed \cdot \alpha_3$ forms a ring
		\item $\zed + \zed \cdot(\alpha_1/p) + \zed \cdot (\alpha_2/p) + \zed \cdot \alpha_3$ forms a ring
		\item $\zed + \zed \cdot (\alpha_1/p) + \zed \cdot(\alpha_2/p) + \zed \cdot (\alpha_3/p)$ forms a ring.
	\end{enumerate}
\end{lemma}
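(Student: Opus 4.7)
The plan is to produce a quartic over-ring $Q' \supsetneq Q$ satisfying $pQ' \subseteq Q$, and then to read off the adapted $\zed$-basis of $Q$ via the elementary divisor theorem.

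First, non-maximality at $p$ provides a quartic over-ring $Q^{(1)} \supsetneq Q$ with $[Q^{(1)}:Q]$ a power of $p$ (pass to the $p$-part of a maximal over-ring). If $pQ^{(1)} \subseteq Q$, set $Q' := Q^{(1)}$. Otherwise let $k \geq 2$ be maximal with $p^{k-1}Q^{(1)} \not\subseteq Q$ and put
\begin{equation*}
Q' := Q + p^{k-1} Q^{(1)}.
\end{equation*}
To check multiplicative closure, write generic elements as $x = a + p^{k-1}b$, $y = c + p^{k-1}d$ with $a,c \in Q$, $b,d \in Q^{(1)}$, and expand
\begin{equation*}
xy = ac + p^{k-1}(ad + bc) + p^{2k-2}bd;
\end{equation*}
then $ac \in Q$, the middle term lies in $p^{k-1}Q^{(1)} \subseteq Q'$, and the tail $p^{k-1}(p^{k-1}bd) \in p^{k-1}Q \subseteq Q$ since $p^k Q^{(1)} \subseteq Q$ by maximality of $k$. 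Thus $Q' \supsetneq Q$ is a quartic ring with $pQ' \subseteq Q$.

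Second, I would pass to the $\zed$-module quotients. The modules $Q/\zed$ and $Q'/\zed$ are free of rank $3$, the inclusion is proper, and $(Q'/\zed)/(Q/\zed) \cong Q'/Q$ is nonzero and annihilated by $p$. The elementary divisor theorem furnishes a $\zed$-basis $\beta_1, \beta_2, \beta_3$ of $Q'/\zed$ together with exponents $e_1, e_2, e_3 \in \{0, 1\}$, not all zero, such that $p^{e_1}\beta_1, p^{e_2}\beta_2, p^{e_3}\beta_3$ is a basis of $Q/\zed$. Lift each $\beta_i$ to $\tilde\beta_i \in Q'$ and set $\alpha_i := p^{e_i}\tilde\beta_i \in Q$, producing a $\zed$-basis $1, \alpha_1, \alpha_2, \alpha_3$ of $Q$.

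Finally, reorder the indices so that those with $e_i = 1$ come first. The relation $\alpha_i / p = p^{e_i - 1}\tilde\beta_i$ then identifies $Q' = \zed \cdot 1 + \zed \tilde\beta_1 + \zed \tilde\beta_2 + \zed \tilde\beta_3$ with exactly one of the three over-rings listed in the lemma, the case being selected by whether $[Q':Q]$ equals $p$, $p^2$, or $p^3$. The only non-formal ingredient is the closure computation for $Q + p^{k-1}Q^{(1)}$; the remainder is a direct application of Smith normal form, and the argument uses no special feature of quartic rings beyond the fact that $1$ generates a rank-one $\zed$-summand of $Q$.
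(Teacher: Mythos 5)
The paper does not prove this lemma; it is quoted verbatim from Bhargava's parametrization paper \cite{B04} (where it underlies the maximality criteria recalled on pp.~1357--1359), so there is no in-paper argument for me to measure your proof against. Your self-contained proof is correct and takes the natural route: manufacture a quartic over-ring $Q'$ with $Q \subsetneq Q'$ and $pQ' \subseteq Q$, then read the three cases off the Smith normal form of the inclusion $Q/\zed \subseteq Q'/\zed$ of free rank-$3$ $\zed$-modules whose quotient is killed by $p$. The construction $Q' = Q + p^{k-1}Q^{(1)}$, with $k$ the largest index for which $p^{k-1}Q^{(1)} \not\subseteq Q$, does exactly what is needed, and the elementary-divisor bookkeeping at the end is routine. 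The one blemish is in the closure check for the tail term: you write $p^{k-1}\bigl(p^{k-1}bd\bigr) \in p^{k-1}Q \subseteq Q$, which presupposes $p^{k-1}bd \in Q$; that does not follow from your hypotheses, which give only $p^{k}Q^{(1)} \subseteq Q$. The clean justification is that $2k-2 \geq k$ once $k \geq 2$, so $p^{2k-2}bd \in p^{2k-2}Q^{(1)} \subseteq p^{k}Q^{(1)} \subseteq Q$ (or, more weakly, $p^{2k-2}bd \in p^{k-1}Q^{(1)} \subseteq Q'$, which already suffices for ring closure). With that one-line correction the argument is complete.
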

It is also shown (see pp. 1357--1359) that the conditions of non-maximality correspond to the following congruence conditions which may be checked modulo $p^2$.  Define
\begin{equation}
\lambda_{kl}^{ij}(A, B) = \det \begin{pmatrix} a_{ij} & b_{ij}\\ a_{kl} & b_{kl}\end{pmatrix}.
\end{equation}
The conditions are
\begin{enumerate}
	\item $\lambda_{22}^{11}, \lambda_{23}^{11}, \lambda_{33}^{11}, \lambda_{13}^{12}$ are multiples of $p$ and $\lambda_{12}^{11}, \lambda_{13}^{11}$ are multiples of $p^2$
	\item $\lambda_{13}^{11}, \lambda_{23}^{11}, \lambda_{13}^{12}, \lambda_{23}^{12}, \lambda_{22}^{13}, \lambda_{23}^{22}$ are all multiples of $p$, and $\lambda_{12}^{11}, \lambda_{22}^{11}, \lambda_{22}^{12}$ are multiples of $p^2$
	\item All the $\lambda_{kl}^{ij}$ are multiples of $p$.
\end{enumerate}
Let $\Phi_p(A, B)$ be the indicator that $(A, B)$ are non-maximal modulo $p$, and extend $\Phi_p$ to $\Phi_q$, $q$ square-free multiplicatively. Define the Fourier transform of $\Phi_p$ to be
\begin{equation}
\hat{\Phi}_q((A,B)) = \frac{1}{q^{24}}\sum_{(X,Y)\in (\zed/q^2 \zed)^{12}}\Phi_q((X,Y))e\left(\frac{[(X,Y), (A,B)]_V}{q^2}\right).
\end{equation}
A detailed study of these Fourier coefficients is being made by other authors.  We use only the trivial bound $\left|\hat{\Phi}_q((A,B))\right| \leq 1$.

Following \cite{B05}, a pair of integral ternary quadratic forms $(A, B)$ is
\emph{reducible} if $A$ and $B$ have a common zero in $\bP^2(\bQ)$ or if the
binary cubic form $f(x,y) = \det(Ax + By)$ is reducible.  Forms which are not reducible are called \emph{totally irreducible}.  Totally irreducible forms correspond to orders in either $A_4$ or $S_4$ quartic fields, and maximal totally irreducible forms correspond to the ring of integers in $A_4$ and $S_4$ fields.  The stabilizer in $G_\zed$ of a totally irreducible form is trivial.  

\section{Automorphic forms on $\GL_3(\bR)$}
We follow the convention of Goldfeld \cite{G06}. The upper half plane $\bH^3 \cong \GL_3(\bR)/(O_3(\bR)\cdot \bR^\times)$ consists of points
\begin{equation}
z_3 = x_3y_3 = \begin{pmatrix} 1 & x_{12} & x_{13}\\ 0 & 1 & x_{23}\\ 0&0&1\end{pmatrix}\begin{pmatrix} y_1y_2 & 0 & 0\\ 0 & y_1 & 0\\ 0&0&1\end{pmatrix}
\end{equation}
with Haar measure
\begin{equation}
d^*z_3 = dx_{12}dx_{13}dx_{23} \frac{dy_1 dy_2}{(y_1y_2)^3}
\end{equation}
This is invariant under scaling by the determinant.
The Siegel set
\begin{equation}
\Sigma_{\frac{\sqrt{3}}{2}, \frac{1}{2}}^3 = \left\{|x_{ij}| \leq \frac{1}{2}, y_i > \frac{\sqrt{3}}{2}\right\}
\end{equation}
contains a fundamental domain for the action of $\SL_3(\zed)$ on $\bH^3$.

The center of the universal enveloping algebra on $\GL_3$ is generated by a pair of Casimir operators $\Delta_1, \Delta_2$, see \cite{G06} p. 153 for their explicit expression in the above coordinates. Given type $\nu = (\nu_1, \nu_2) \in \bC^2$ define
\begin{equation}
I_\nu(z) = y_1^{\nu_1+2\nu_2}y_2^{2\nu_1 + \nu_2}.
\end{equation}
This is an joint eigenvector of the Casimir operators. Define 
\begin{equation}
w_3 = \begin{pmatrix} &&1\\ &-1 &\\1 &&\end{pmatrix}, \qquad u = \begin{pmatrix} 1 & u_{12} & u_{13}\\ & 1 & u_{23}\\ && 1\end{pmatrix}.
\end{equation}
Given parameters $m = (m_1, m_2)$ define character
\begin{equation}
\psi_m(u) = e^{2\pi i (m_1 u_{23} + m_2 u_{12})}.
\end{equation}
The Jacquet Whittaker function is
\begin{equation}
W_{\Jacquet}(z,\nu,\psi_m) = \int_{-\infty}^\infty \int_{-\infty}^\infty \int_{-\infty}^\infty I_\nu(w_3 \cdot u \cdot z) \overline{\psi_m(u)} du_{12}du_{13}du_{23}.
\end{equation}
This satisfies
\begin{equation}
W_{\Jacquet}(uz, \nu, \psi_m) = \psi_m(u)W_{\Jacquet}(z, \nu, \psi_m). 
\end{equation}

The estimate which we require regarding the Whittaker function is as follows.
\begin{theorem}[\cite{B84}, Theorem 2.1]
	There exist constants $N_1$ and $N_2$, depending in a continuous fashion on $\nu_1, \nu_2$ such that, if $n_1 > N_1$, $n_2 > N_2$, then 
	\begin{equation}
	y_1^{n_1}y_2^{n_2} W_{\Jacquet}\left(\begin{pmatrix} y_1y_2\\ & y_1 \\ &&1 \end{pmatrix}, (\nu_1, \nu_2), \psi_{1,1} \right)
	\end{equation}
	is bounded on $\bH^3$.
\end{theorem}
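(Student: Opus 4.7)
The plan is to extract the decay in $y_1, y_2$ from the defining triple integral by iterated reduction to classical $\GL_2$ Whittaker / Bessel-function estimates.

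First I would apply the Iwasawa decomposition to $w_3 \cdot u \cdot y_3$ with $y_3 = \diag(y_1 y_2, y_1, 1)$, expressing $I_\nu(w_3 u y_3)$ as an explicit product of rational functions of $(u_{12}, u_{13}, u_{23}, y_1, y_2)$ raised to powers linear in $\nu_1, \nu_2$. Since the character $\overline{\psi_{1,1}}$ does not depend on $u_{13}$, I would perform that integration first and evaluate it in closed form as a Beta-type integral, which collapses the integrand to a function of $(u_{12}, u_{23}, y_1, y_2)$ with a simpler power-of-polynomial structure. After rescaling $u_{12} \mapsto u_{12}/y_1$ and $u_{23} \mapsto u_{23}/y_2$ to pull the $y$-dependence out front, the remaining oscillatory integrations in $u_{12}$ and $u_{23}$ have the same shape as the classical Jacquet integral on $\GL_2$, and evaluate (up to a one-variable Mellin convolution) to products of $K$-Bessel functions of arguments proportional to $2\pi y_1$ and $2\pi y_2$.

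The required bound then follows by inserting the exponential decay $K_\alpha(t) \ll_\alpha t^{-1/2} e^{-t}$ for $t \gg 1$ together with polynomial bounds near $t = 0$, and Stirling's formula on a vertical contour to control the gamma-function kernel in the Mellin variable. After multiplication by $y_1^{n_1} y_2^{n_2}$, the exponential factor $e^{-2\pi(y_1 + y_2)}$ coming from the Bessel functions dominates any polynomial in $y_i$ as soon as $n_i$ exceeds a threshold $N_i$ determined by $\RE \nu_1, \RE \nu_2$ and the linear combinations appearing in the Iwasawa exponents.

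The main obstacle I foresee is that the defining triple integral converges absolutely only when $(\nu_1, \nu_2)$ lies in some right half-tube, so each of the manipulations above is initially valid only there. Extending the resulting bound to arbitrary $(\nu_1, \nu_2) \in \bC^2$ requires verifying that $W_{\Jacquet}$ and the Mellin-Barnes expression produced above both admit holomorphic continuation and agree on the region of absolute convergence; the continuous dependence of $N_1, N_2$ on $\nu$ is then recovered by a Phragmén--Lindelöf argument on vertical strips combined with Stirling. Making the continuation rigorous while keeping the constants locally uniform in $\nu$ is the step I expect to require genuine work.
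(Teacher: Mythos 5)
The paper itself does not prove this statement; it is imported verbatim from Bump's monograph (Theorem~2.1 of \cite{B84}) and used as a black box, so there is no in-paper argument to compare against. Your outline is, however, essentially Bump's own proof, and the overall strategy is sound. A few points want adjusting before it becomes a proof. The substitution that cleanly extracts the $y$-dependence is the conjugation $u \mapsto y u' y^{-1}$, which sends $u_{12}\mapsto y_2 u'_{12}$, $u_{23}\mapsto y_1 u'_{23}$, and $u_{13}\mapsto y_1 y_2 u'_{13}$; this pulls a prefactor $y_1^{2-2\nu_1-\nu_2}y_2^{2-\nu_1-2\nu_2}$ out front (from the left action of a diagonal on $I_\nu$ and the Jacobian) and moves the remaining $y$-dependence entirely into the character, which becomes $e^{-2\pi i(y_2 u'_{12}+y_1 u'_{23})}$. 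In particular $y_1$ and $y_2$ enter the other way around from your $u_{12}/y_1$, $u_{23}/y_2$, and $u_{13}$ must be rescaled as well. Second, after integrating out $u_{13}$ (which, as you say, is a Beta-type integral since the character does not see it), the remaining double integral does \emph{not} split into a product of one-dimensional Bessel integrals: the two quadratic forms appearing in the explicit Iwasawa decomposition of $w_3 u'$ remain coupled, which is precisely why the answer is the single Mellin--Barnes \emph{convolution} of two $K$-Bessel factors you allude to (Bump's explicit formula) rather than a bare product. Third, a logical point worth being precise about: the thresholds $N_1, N_2$ are not what lets the exponential decay for large $y_i$ beat a polynomial --- exponential decay beats every polynomial with no threshold at all. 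They are forced by the behavior as $y_1$ or $y_2 \to 0$, where the continued Whittaker function blows up at worst polynomially, with exponents controlled by $\Re\nu_1,\Re\nu_2$ (equivalently, by the locations of the poles of the Gamma factors in the Mellin--Barnes integrand), and it is those exponents that $n_1, n_2$ must exceed. Once the formula is written in Mellin--Barnes form, the continuity of $N_1, N_2$ in $\nu$ is simply the holomorphic dependence of those pole locations on $\nu$; I would not reach for a Phragm\'en--Lindel\"of argument there. None of this is a gap in the strategy, but these are the corrections you would need to make the sketch rigorous.
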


A Maass form $\phi$ of type $(\nu_1, \nu_2)$ is a smooth function in $L^2(\SL_3(\zed)\backslash \bH^3)$ which satisfies for Casimir operator $D$, $D\phi(z) = \lambda_D(\nu)\phi(z)$, where $\lambda_D(\nu)$ is the eigenvalue of $D$ on $I_\nu(z)$, and also, for each $U$ among
\begin{equation}
\SL_3(\bR), \begin{pmatrix} 1 &*&*\\ 0&1&0\\0&0&1\end{pmatrix}, \begin{pmatrix} 1&0&*\\0&1&*\\0&0&1\end{pmatrix},
\end{equation}
for all $z \in \bH^3$,
\begin{equation}
\int_{\SL_3(\zed)\cap U \backslash U} \phi(uz) du = 0.
\end{equation}

Let $U_2(\zed) = \begin{pmatrix} 1&\zed\\ &1\end{pmatrix}$.
A Maass form $\phi$ has a Fourier expansion
\begin{align}
\phi(z) &= \sum_{\gamma \in U_2(\zed)\backslash \SL_2(\zed)} \sum_{m_1=1}^\infty \sum_{m_2 \neq 0} \frac{A(m_1, m_2)}{|m_1m_2|}\\ \notag
& \times W_{\Jacquet}\left(\begin{pmatrix} |m_1m_2| \\ & m_1\\ &&1 \end{pmatrix}\begin{pmatrix} \gamma \\ & 1\end{pmatrix} z, \nu, \psi_{1, \frac{m_2}{|m_2|}} \right),
\end{align}
which may be expressed
\begin{align}
\phi(z) &= \sum_{\tiny{\begin{pmatrix} a&b\\c&d\end{pmatrix}} \in U_2(\zed)\backslash \SL_2(\zed)} \sum_{m_1 =1}^\infty \sum_{m_2 \neq 0}\frac{A(m_1,m_2)}{|m_1m_2|}e^{2\pi i\left[m_1 (cx_{13} + dx_{23}) +m_2 \Re\left(\frac{az_2 + b}{cz_2 + d}\right)\right]}\\
\notag &\times W_{\Jacquet}\left(\begin{pmatrix}|m_1m_2|\\ & m_1 \\&&1\end{pmatrix}\begin{pmatrix} \frac{y_1y_2}{|cz_2 + d|}\\ & y_1 |cz_2+d|\\&&1
\end{pmatrix}, (\nu_1, \nu_2), \psi_{1,1} \right),
\end{align}
where $z_2 = \begin{pmatrix}y_2 & x_{12}\\ &1 \end{pmatrix}$.
The Fourier coefficients satisfy $\frac{A(m_1, m_2)}{|m_1m_2|} = O(1)$. 
\begin{lemma}
	As $z$ varies in the Siegel set $\Sigma_{\frac{\sqrt{3}}{2}, \frac{1}{2}}^3$, $\phi(z)$ satisfies the bound, for any $n_1, n_2 > 0$,
	\begin{equation}
	\phi(z) \ll y_1^{-n_1}y_2^{-n_2}.
	\end{equation}
\end{lemma}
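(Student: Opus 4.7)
The plan is to insert the Fourier expansion of $\phi$ stated immediately above, apply Bump's rapid decay bound on the Jacquet--Whittaker function, and resum the result by treating the sum over $\gamma \in U_2(\zed)\backslash \SL_2(\zed)$ as an Eisenstein-like series in $z_2$. Fix target exponents $n_1, n_2 > 0$. Choose auxiliary exponents $a_2 > \max(N_2, n_2, 1)$ and then $a_1 > \max(N_1, n_1, 2a_2 + 2)$; both can be taken arbitrarily large, so no constraint is binding. Bump's theorem yields the uniform estimate
\begin{equation}
W_{\Jacquet}\!\left(\diag(Y_1 Y_2, Y_1, 1), (\nu_1, \nu_2), \psi_{1,1}\right) \ll Y_1^{-a_1} Y_2^{-a_2}.
\end{equation}

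Writing $\gamma = \begin{pmatrix} a & b \\ c & d \end{pmatrix}$, a direct computation identifies the Whittaker arguments in the Fourier expansion as $Y_1 = m_1 y_1 |cz_2+d|$ and $Y_2 = |m_2| y_2/|cz_2+d|^2$, so Bump's bound becomes
\begin{equation}
W_{\Jacquet}(\cdots) \ll m_1^{-a_1} |m_2|^{-a_2} \, y_1^{-a_1} y_2^{-a_2} \, |cz_2+d|^{2a_2-a_1}.
\end{equation}
Invoking the stated bound $A(m_1, m_2)/|m_1 m_2| = O(1)$, the sums over $m_1 \geq 1$ and $m_2 \neq 0$ converge absolutely since $a_1, a_2 > 1$, leaving the factor $y_1^{-a_1} y_2^{-a_2}$. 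The two cosets with $c = 0$ contribute directly because $|cz_2+d| = 1$. For $c \neq 0$ the remaining sum is majorized by $\sum_{\gcd(c,d)=1} |cz_2+d|^{2a_2-a_1}$, an Eisenstein-like series on $\SL_2(\zed)\backslash \bH^2$; by the standard Eisenstein convergence on the fundamental domain, this series converges to a bounded function of $z_2$ throughout the Siegel set precisely because $y_2 \geq \sqrt{3}/2$ and $a_1 - 2a_2 > 2$.

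Combining these estimates gives $\phi(z) \ll y_1^{-a_1} y_2^{-a_2}$. Since $y_j \geq \sqrt{3}/2$ on the Siegel set and $a_j \geq n_j > 0$, an elementary comparison on the regions $\{y_j \geq 1\}$ and $\{\sqrt{3}/2 \leq y_j < 1\}$ gives $y_j^{-a_j} \ll_{a_j, n_j} y_j^{-n_j}$, which upgrades the bound to the claimed target exponents $(n_1, n_2)$. The principal obstacle is the asymmetric exponent $2a_2 - a_1$ in the Eisenstein tail, which forces $a_1$ to dominate $a_2$ and so dictates the order in which the auxiliary parameters must be chosen relative to the Bump thresholds $N_1, N_2$; once this asymmetric scaling of $(Y_1, Y_2)$ coming from $\gamma$ is correctly accounted for, all remaining steps are routine.
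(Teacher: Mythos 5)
Your proof is correct and follows the same approach as the paper: insert the Fourier expansion along $U_2(\zed)\backslash\SL_2(\zed)$, apply Bump's rapid-decay bound with the identification $Y_1 = m_1 y_1 |cz_2+d|$, $Y_2 = |m_2| y_2/|cz_2+d|^2$, sum out $m_1, m_2$, and control the residual Eisenstein-type sum via $a_1 - 2a_2 > 2$ (the paper imposes the marginally stronger $N_1 > 4N_2$, $N_2 > 1$ and cites Iwaniec's Lemma 2.10 for the lattice-point count, but the mechanism is identical). Your notation, separating Bump's thresholds from the working exponents and making the convergence condition on the Eisenstein tail explicit, is cleaner than the paper's.
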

\begin{proof}
	Choose $N_1 > n_1$, $N_2 > n_2$ satisfying $N_1 > 4N_2$ and $N_2 >1$ and sufficiently large so that the Jacquet Whittaker function satisfies
	\begin{equation}
	W_{\Jacquet}((y_1, y_2)) \ll y_1^{-N_1}y_2^{-N_2}.
	\end{equation}
	Thus
	\begin{equation}
	|\phi(z)| \ll  \sum_{\tiny{\begin{pmatrix} a&b\\c&d\end{pmatrix}} \in U_2(\zed)\backslash \SL_2(\zed)} \sum_{m_1 =1}^\infty \sum_{m_2 \neq 0} (m_1 y_1 |cz_2 + d|)^{-N_1}\left(\frac{|m_2|y_2}{|cz_2 + d|^2} \right)^{-N_2}.
	\end{equation}	
	Thus sums over $m_1$ and $m_2$ converge to a constant, leaving the estimate
	\begin{equation}
	|\phi(z)| \ll y_1^{-N_1}y_2^{-N_2} \sum_{c,d, \gcd(c,d)=1} \frac{1}{|cz_2 + d|^{-2N_2}}.
	\end{equation}
	By \cite{I02} Lemma 2.10, uniformly in $z_2$ and $X$,
	\begin{equation}
	\#\{c,d: \gcd(c,d)=1, |cz_2 + d|^2< y_2 X\} \ll (1+X).
	\end{equation}
	Since $N_2>1$ it follows that the sum over $c,d$ is bounded uniformly in $z_2$.
\end{proof}

For $n \geq 1$ the Hecke operator $T_n$ is defined by
\begin{equation}
T_n \phi(z) = \frac{1}{n} \sum_{\substack{abc=n\\ 0 \leq c_1, c_2 < c\\ 0 \leq b_1 < b}} \phi\left(\begin{pmatrix} a& b_1 & c_1\\ &b&c_2\\ && c\end{pmatrix} z\right).
\end{equation}
These operators are normal, and commute with each other and with the Casimir operators.  If $\phi$ is a Maass form which is a joint eigenfunction of the Hecke algebra normalized such that $A(1,1)=1$ then
\begin{equation}
T_n \phi = A(n,1)\phi.
\end{equation}
The Fourier coefficients satisfy the multiplicativity relationship
\begin{align}
A(m_1m_1', m_2m_2') &= A(m_1, m_2)A(m_1', m_2'), \qquad (m_1m_2, m_1'm_2')=1\\
\notag A(n,1)A(m_1,m_2) &= \sum_{\substack{d_0d_1d_2 = n \\ d_1|m_1\\ d_2|m_2}} A\left(\frac{m_1 d_0}{d_1}, \frac{m_2 d_1}{d_2} \right)\\
\notag A(1,n)A(m_1, m_2) &= \sum_{\substack{d_0d_1d_2 = n\\ d_1|m_1 \\ d_2|m_2}} A\left(\frac{m_1 d_2}{d_1}, \frac{m_2 d_0}{d_2} \right)\\
\notag A(m_1, 1)A(1, m_2)&= \sum_{d|(m_1, m_2)}A\left( \frac{m_1}{d}, \frac{m_2}{d}\right).
\end{align}

Let 
\begin{equation}
w_3 = \begin{pmatrix} &&1\\ &-1\\1\end{pmatrix} 
\end{equation}
be the long element of the Weyl group on $\GL_3$.  The dual Maass form of $\phi$ is 
\begin{equation}
\tilde{\phi}(g) = \phi\left(w_3 \left(g^{-1}\right)^t w_3\right).
\end{equation}
This is a form of type $(\nu_2, \nu_1)$ with $m_1, m_2$ Fourier coefficient $A(m_2,m_1)$.

\subsection{Automorphic forms on $\GL_2(\bR)$}
Following Goldfeld, identify the upper half plane $\bH^2 \cong \GL_2(\bR)/(O_2(\bR)\cdot \bR^\times)$ with $\left\{\begin{pmatrix}y&x\\&1 \end{pmatrix}\right\}$, with Haar measure $\frac{dx dy}{y^2}$. Note that the involution 
\begin{equation}
g^\iota = \begin{pmatrix} &-1\\1\end{pmatrix} (g^{-1})^t \begin{pmatrix} &1\\-1\end{pmatrix}
\end{equation}
is the identity on $\bH^2$.
The Siegel set 
\begin{equation}
\Sigma_{\frac{\sqrt{3}}{2}, \frac{1}{2}}^2 = \left\{|x| \leq \frac{1}{2}, y > \frac{\sqrt{3}}{2}\right\}
\end{equation}
contains a fundamental domain for the action of $\SL_2(\zed)$ on $\bH^2$.

A weight 0 Maass cusp form of Laplace eigenvalue $\nu = \frac{1}{2} + it$ on $\SL_2(\zed)\backslash \bH^2$ has a Fourier development
\begin{equation}
f(z) = \sqrt{2\pi y}\sum_{n \neq 0} a(n)K_{\nu-\frac{1}{2}}(2\pi |n| y) e^{2\pi i nx}.
\end{equation}
If $f(z)$ is an eigenfunction of the $\GL_2$ Hecke algebra with $a(1) = 1$ then $T_n f = a(n)f$.  These eigenvalues satisfy 
\begin{equation}
a(m)a(n) = \sum_{d |(m,n)} a\left(\frac{mn}{d^2}\right).
\end{equation}
The best bound for $|a(m)|$ is $|a(m)| \ll |m|^{\frac{7}{64} +\epsilon}$, \cite{K03}.
As $y \to \infty$,
\begin{equation}
K_{\nu-\frac{1}{2}}(y) \sim \sqrt{\frac{\pi}{2y}} e^{-y}
\end{equation}
and in particular, $f(z)$ decays exponentially as $y \to \infty$.
\subsection{Conjugation invariant kernels}
Let $G^1 = G_{\bR}/\bR^+$, identified with
\begin{equation}
\{(g_3,g_2) \in \GL_3(\bR) \times \GL_2(\bR): \det g_3 = |\det g_2| = 1\}.
\end{equation}
Let $\psi = \psi_3 \otimes \psi_2$ be a smooth function of compact support on $G^1$ which is conjugation invariant, in the sense that $\psi(h^{-1}gh)=\psi(g)$, and supported on the conjugacy class of the diagonal matrices.  As a convolution kernel, $\psi$ commutes with translation since for $f \in C(G^1)$,
\begin{align}
\psi * L_h f(g) &= \int \psi(gx^{-1})f(hx)dx\\
\notag &= \int \psi(gx^{-1}h)f(x)dx = \int \psi(hgx^{-1})f(x)dx = \psi * f(hg).
\end{align} 
Thus convolution with $\psi$ commutes with the action of the Lie algebra.  

Let $\phi = \phi_3 \otimes \phi_2$ be the tensor of Hecke-eigen Maass cusp forms on $\GL_3(\bR)$ and $\GL_2(\bR)$.  Since convolution with $\psi$ commutes with the action of the Casimir operators and Hecke algebras on $\GL_3$ and $\GL_2$, it follows from strong multiplicity 1 (see e.g. \cite{G06} p.393) that $\psi$ acts by a scalar on $\phi$,
\begin{equation}
\psi * \phi = \Lambda_{\psi, \phi}\phi.
\end{equation}
\section{Orbital integral representation}
Let $f_G$ be a smooth, conjugation invariant function of compact support on $G^1$, as above.  Let $f_D$ be a smooth function of compact support on $\bR^+$.  Let $f_j$, $j = 0, 1, 2,$ be given by $f_j = f_G \otimes f_D$, interpreted as
\begin{equation}
f_j(x) = \sum_{\substack{g \in G^1, \lambda \in \bR^+\\ \lambda g \cdot x_j = x}}f_G(g)f_D(\lambda^{12}).
\end{equation}

Let $\phi = (\phi_3, \phi_2)$ be a pair of $\GL_3$ and $\GL_2$ Hecke-eigen Maass cusp forms of full level, normalized to have first Fourier coefficient equal to 1.  Let $q$ be a square-free integer.  Define the $q$-non-maximal orbital $L$-function in $\Re(s)>1$ by 
\begin{equation}
Z_q^j(f_j, \phi; s) = \int_{\lambda \in \bR^+} \lambda^{12s}  \int_{g \in G^1/G_{\zed}} \phi\left(g^{-1}\right)\sum_{x \in V(\zed)} \Phi_q(x) f_j\left(\lambda g x \right)dg\frac{d\lambda}{\lambda}.
\end{equation}
Also introduce the twisted Shintani $\sL$-functions, defined for $\Re(s)>1$ by
\begin{equation}
\sL_q^j(\phi, s) = \sum_{m=1}^\infty \frac{1}{m^s} \sum_{g_0 \cdot x_j = x_j}\sum_{i=1}^{h_j(m)}\Phi_q\left(x_{i,m}^j\right)\frac{\phi\left(g_{i,m}^jg_0\right)}{|\Gamma_j(i,m)|}.
\end{equation}

The twisted $\sL$-function may be recovered from the orbital $L$-function as follows.
\begin{lemma}
	For $\Re(s)>1$ we have the factorization
	\begin{equation}
	Z_q^j(f_j, \phi; s) = \frac{\Lambda_{f_G, \phi}}{12} \sL_q^j(\phi,s)\tilde{f}_D(s).
	\end{equation}
\end{lemma}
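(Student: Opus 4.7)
The plan is to unfold the $G^1/G_\zed$-integral into a sum over $G_\zed$-orbits in $V(\zed)$, recognize the resulting inner group integral as a convolution $f_G * \phi$ so as to extract the eigenvalue $\Lambda_{f_G, \phi}$, and finally evaluate the remaining scaling integral as the Mellin transform $\tilde{f}_D(s)$. Absolute convergence for $\Re(s) > 1$, which legitimizes all the interchanges, should follow from the compact support of $f_G$ and $f_D$, the rapid decay of $\phi$ in the Siegel set established in the lemma above, and the $O(X)$ bound on the number of $G_\zed$-orbits in $V(\zed)$ of discriminant at most $X$.

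Since $f_j$ is supported on the $G_\bR$-orbit of $x_j$, only integer points of discriminant $\varepsilon_j m$ contribute; these are $G_\zed$-translates of the representatives $x_{i,m}^j = g_{i,m}^j \cdot x_j$, with stabilizer $\Gamma_j(i,m)$. Using $G_\zed$-invariance of $\Phi_q$ (a congruence condition on ring structure) and automorphy of $\phi$, the usual unfolding gives
\[
\int_{G^1/G_\zed} \phi(g^{-1}) \sum_{x \in V(\zed)} \Phi_q(x) f_j(\lambda g x)\,dg = \sum_{m, i} \frac{\Phi_q(x_{i,m}^j)}{|\Gamma_j(i,m)|} \int_{G^1} f_j(\lambda g x_{i,m}^j) \phi(g^{-1})\,dg.
\]
From $D(\lambda g g_{i,m}^j x_j) = \lambda^{12} m \varepsilon_j$ and the definition of $f_j$, the internal scaling parameter must equal $\lambda m^{1/12}$, producing a factor $f_D(\lambda^{12} m)$, and the residual group equation has $n_j$ solutions parameterized by $g_0 \in \Stab_{G^1}(x_j)$. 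Writing $g_{i,m}^j = m^{1/12} h_{i,m}^j$ with $h_{i,m}^j \in G^1$ (scaling being central in $G_\bR$) and substituting $u = g h_{i,m}^j g_0$, the $G^1$-integral becomes $\int f_G(u) \phi(h_{i,m}^j g_0 u^{-1})\,du$. The change of variable $w = h_{i,m}^j g_0 u^{-1}$ (measure-preserving by unimodularity) combined with the conjugation-invariance identity $f_G(v y^{-1}) = f_G(y^{-1} v)$ converts this into the convolution $(f_G * \phi)(h_{i,m}^j g_0)$, which equals $\Lambda_{f_G, \phi}\, \phi(g_{i,m}^j g_0)$, using in the last step that $\phi$ is invariant under the $\bR^+$-scaling that distinguishes $h_{i,m}^j$ from $g_{i,m}^j$.

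Finally, the substitution $u = \lambda^{12} m$ in the $\lambda$-integral yields
\[
\int_0^\infty \lambda^{12s} f_D(\lambda^{12} m) \frac{d\lambda}{\lambda} = \frac{1}{12\, m^s}\tilde{f}_D(s),
\]
and assembling all factors produces the claimed identity. The main obstacle is not any single deep estimate but the bookkeeping of the discriminant-weight $12$ across the interplay of $\bR^+$-scaling on $V_\bR$, the central subgroup of $G_\bR$, and the quotient $G^1 = G_\bR/\bR^+$ on which $\phi$ is defined; a secondary subtlety is verifying that the conjugation-invariance manipulation lines up with the paper's specific convolution convention $\psi * f(g) = \int \psi(g x^{-1}) f(x)\,dx$, producing $(f_G * \phi)$ evaluated at $h_{i,m}^j g_0$ rather than at its inverse or a conjugate.
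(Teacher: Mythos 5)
Your proof is correct and follows essentially the same route as the paper's: unfold the integral over $G^1/G_\zed$ into a sum over $G_\zed$-orbits via the representatives $x_{i,m}^j$ and their stabilizers, recognize the inner group integral as a convolution $(f_G * \phi)$ to extract the eigenvalue $\Lambda_{f_G,\phi}$, and evaluate the scaling integral as $\frac{1}{12 m^s}\tilde f_D(s)$. You supply more explicit bookkeeping (the conjugation-invariance swap $f_G(w^{-1}hg_0) = f_G(hg_0 w^{-1})$ and the separation $g_{i,m}^j = m^{1/12}h_{i,m}^j$) than the paper's terser display, but the argument is the same; the one small slip is quoting an $O(X)$ rather than the paper's $O(X\log X)$ bound on the orbit count, which does not affect convergence for $\Re(s)>1$.
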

\begin{proof}
	Write
	\begin{align}
	&Z_q^j(f_j, \phi; s) = \int_{\lambda \in \bR^+} \lambda^{12s}  \int_{g \in G^1/G_{\zed}} \phi\left(g^{-1}\right)\sum_{x \in V(\zed)} \Phi_q(x) f_j\left(\lambda g x \right)dg\frac{d\lambda}{\lambda}\\
	\notag &= \int_{0}^\infty\lambda^{12s}\int_{G^1/G_{\zed}}\phi\left(g^{-1}\right)\sum_{m=1}^\infty \sum_{i=1}^{h_j( m)}\frac{\Phi_q(x_{i,m}^j)}{|\Gamma_j(i,m)|}\sum_{\gamma \in G_\zed}f_j(\lambda g\gamma g_{i,m}^j x_j)dg\frac{d\lambda}{\lambda}\\
	\notag &= \frac{1}{12} \sum_{m=1}^\infty \sum_{i=1}^{h_j( m)}\frac{\Phi_q(x_{i,m}^j)}{|\Gamma_j(i,m)|} \int_{G^1} \phi\left(g^{-1}\right)\sum_{g_0 x_j = x_j}f_G(gg_{i,m}^jg_0)dg \int_0^\infty \lambda^s f_D(\lambda m)\frac{d \lambda}{\lambda}\\
	&\notag = \frac{\Lambda_{f_G, \phi}}{12} \sL_q^j(\phi,s)\tilde{f}_D(s).
	\end{align}
\end{proof}
For $g = (g_3, g_2) \in G^1$, the Poisson summation formula gives
\begin{align}
\sum_{x \in V(\zed)}\Phi_q(x)f_j(g\cdot x) &= \sum_{a \in V(\zed/q^2 \zed)}\Phi_q(a)\sum_{x \in V(\zed)}f(g\cdot(q^2 x + a))\\
\notag &= \frac{1}{q^{24}}\sum_{a \in V(\zed/q^2\zed)}\Phi_q(a)\sum_{y \in \hat{V}(\zed)}e\left(\frac{1}{q^2}[y,a]_V \right)\hat{f}\left(g^\iota \frac{y}{q^2} \right)\\
\notag &= \sum_{y \in \hat{V}(\zed)}\hat{\Phi}_q(y)\hat{f}\left(g^\iota \frac{y}{q^2}\right).
\end{align}
Define truncated orbital functions
\begin{align}
Z_q^{j,+}(f_j, \phi;s) &=
\int_{\frac{1}{q^2}}^\infty
\lambda^{12 s}\int_{G^1/G_{\zed}}\phi\left(g^{-1}\right) \sum_{x \in V(\zed)} \Phi_q(x) f_j\left(\lambda g x \right)dg\frac{d\lambda}{\lambda}\\\notag
\hat{Z}_q^{j,+}(f_j, \tilde{\phi};1- s) &=
\int_{q^2}^\infty \lambda^{12-12 s}\int_{G^1/G_{\zed}}\tilde{\phi}\left(g^{-1}\right)
\sum_{x \in \hat{V}(\zed)} \hat{\Phi}_q(x)
\hat{f}_j\left( \lambda g \frac{x}{q^2}
\right)dg\frac{d\lambda}{\lambda}.
\end{align}
Splitting the integral over $\lambda$ at $\lambda = \frac{1}{q^2}$ and applying
Poisson summation obtains the split functional equation.
\begin{lemma}[Split functional equation]
	We have
	\begin{equation}
	Z_q^j(f_j, \phi; s) = Z_q^{j, +}(f_j, \phi; s) + \hat{Z}_q^{j,
+}\left(\hat{f}_j, \tilde{\phi}; 1-s\right) .
	\end{equation}
\end{lemma}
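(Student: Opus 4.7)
The plan is to split the $\lambda$-integral defining $Z_q^j(f_j, \phi; s)$ at the threshold $\lambda = 1/q^2$, exactly as the sentence preceding the lemma advertises. The piece $\lambda \geq 1/q^2$ is by inspection equal to $Z_q^{j,+}(f_j, \phi; s)$. All the content of the lemma lies in transforming the piece $0 < \lambda < 1/q^2$ into $\hat{Z}_q^{j,+}(\hat{f}_j, \tilde{\phi}; 1-s)$ via the Poisson summation identity derived immediately above the lemma.

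Concretely, I would apply that identity to the function $x \mapsto f_j(\lambda g \cdot x)$. A short change-of-variable computation, using the identity $[x,y]_V = [gx, g^\iota y]_V$ together with the fact that the $G^1$-action on $V_\bR$ has Jacobian $1$, gives the Fourier transform
\[\widehat{f_j(\lambda g\cdot)}(y) \;=\; \lambda^{-12}\, \hat{f}_j(\lambda^{-1} g^\iota y),\]
so Poisson yields
\[\sum_{x \in V(\zed)} \Phi_q(x)\, f_j(\lambda g\cdot x) \;=\; \lambda^{-12} \sum_{y \in \hat{V}(\zed)} \hat{\Phi}_q(y)\, \hat{f}_j\!\left(\lambda^{-1} g^\iota y/q^2\right).\]
Substituting this into the lower piece of $Z_q^j$ absorbs $\lambda^{-12}$ into the spectral weight, converting $\lambda^{12s}$ to $\lambda^{12s - 12}$. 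Two final substitutions then close the argument: first, $\lambda \mapsto \lambda^{-1}$ sends $\int_0^{1/q^2} \lambda^{12s-12}\frac{d\lambda}{\lambda}$ to $\int_{q^2}^\infty \lambda^{12-12s} \frac{d\lambda}{\lambda}$; second, $g \mapsto g^\iota$ on the quotient $G^1/G_\zed$, which preserves Haar measure and stabilizes the integer subgroup $G_\zed$. Under these, the argument of $\hat{f}_j$ becomes $\lambda g y/q^2$ and $\phi(g^{-1})$ becomes $\phi((g^{-1})^\iota) = \tilde{\phi}(g^{-1})$; the latter identity is exactly the definition of the dual Maass form on the $\GL_3$ factor, while on the $\GL_2$ factor $\iota$ is already noted to act trivially on $\bH^2$. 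The resulting expression matches the definition of $\hat{Z}_q^{j,+}(\hat{f}_j, \tilde{\phi}; 1-s)$.

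No serious obstacle appears; this is essentially a Tate-thesis style manipulation once Poisson summation is in hand. The only point I would verify carefully is the global identification $\phi \circ \iota = \tilde{\phi}$ on the full product group $G^1$, so that the Cartan-type involution of the ambient group exactly implements passage to the dual automorphic representation, and that the long element $\tau_G$ on $G = \GL_3 \times \GL_2$ restricts to $w_3$ on the first factor and to the corresponding Weyl element on the second. Convergence of both sides in $\Re(s) > 1$ is inherited from convergence of $Z_q^j$; in fact both truncated pieces converge for all $s \in \bC$ because $f_j$ has compact support and $\hat{f}_j$ is Schwartz, which is ultimately why the split identity will be useful for meromorphic continuation later in the paper.
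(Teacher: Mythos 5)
Your argument is correct and is exactly the computation the paper compresses into the single sentence preceding the lemma: split the $\lambda$-integral at $\lambda = 1/q^2$, apply the Poisson summation identity already derived in the text (tracking the extra $\lambda^{-12}$ and the $(\lambda g)^\iota = \lambda^{-1}g^\iota$ coming from the scalar action), and then substitute $\lambda \mapsto \lambda^{-1}$ and $g \mapsto g^\iota$, using $\phi\circ\iota = \tilde\phi$ and the $\iota$-stability of $G_\zed$ and of Haar measure. This is the same approach as the paper, just carried out in full detail.
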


\section{Proof of Theorem}\label{proof_section}

As a first step, we verify
\begin{proposition}
 The twisted non-maximal function $\sL_q^j(\phi, s)$ extends to a holomorphic
function on $\bC$. In particular, for $s = \sigma + it$, in $\sigma \leq 1$ the orbital $L$-functions satisfy the bound
\begin{equation}
|Z_q^j(f_j, \phi; s)| \ll_{f_j, \sigma} q^{24(1-\sigma)}.
\end{equation} 
\end{proposition}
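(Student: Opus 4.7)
The strategy is to exploit the split functional equation established in the preceding lemma, namely
\begin{equation*}
Z_q^j(f_j,\phi;s) = Z_q^{j,+}(f_j,\phi;s) + \hat{Z}_q^{j,+}(\hat{f}_j,\tilde{\phi};1-s),
\end{equation*}
by showing that each truncated piece extends to an entire function of $s$, and that both satisfy the bound $\ll q^{24(1-\sigma)}$ in $\sigma \leq 1$. Holomorphy of $\sL_q^j(\phi,s)$ on $\bC$ then follows from the factorization $Z_q^j(f_j,\phi;s) = \frac{\Lambda_{f_G,\phi}}{12}\sL_q^j(\phi,s)\tilde{f}_D(s)$: at any given $s_0 \in \bC$ one may choose $f_G$ so that $\Lambda_{f_G,\phi} \neq 0$ and $f_D$ so that $\tilde{f}_D(s_0) \neq 0$, which gives local holomorphy of $\sL_q^j$ at $s_0$ and hence global holomorphy on $\bC$.

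The essential point for entirety of each truncated piece is that the $\lambda$-integration is effectively confined to a compact interval. In $Z_q^{j,+}$, since $f_j$ is compactly supported and every nonzero $x \in V(\zed)$ has $|\Disc(x)| \geq 1$, the relation $|\Disc(\lambda g x)| = \lambda^{12}|\Disc(x)|$ on $G^1$ forces $\lambda \leq C(f_j)$, so the integration runs over $\lambda \in [1/q^2, C]$. For $\hat{Z}_q^{j,+}$, after the substitution $\lambda = q^2 u$, the Schwartz decay of $\hat{f}_j$ becomes rapid decay in $u$, effectively limiting $u$ to a bounded interval $[1, C']$ up to rapidly decaying tails. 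On these compact $\lambda$-intervals, $\lambda^{12s-1}$ and $\lambda^{11-12s}$ are locally uniformly bounded in $s$, so it remains to verify absolute convergence of the inner integral $\int_{G^1/G_\zed}\phi(g^{-1})\sum_x \Phi_q(x)f_j(\lambda g x)\,dg$ uniformly in $\lambda$. This follows by combining the rapid cuspidal decay of $\phi$ proved in the earlier lemma with a Shintani-type lattice-point count on the Siegel domain for $G^1/G_\zed$, yielding holomorphy of each truncated piece.

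For the quantitative bound in $\sigma \leq 1$, placing absolute values inside and using the uniform bound on the inner integral yields
\begin{equation*}
|Z_q^{j,+}(f_j,\phi;s)| \ll_{f_j} \int_{1/q^2}^{C}\lambda^{12\sigma-1}\,d\lambda \ll_\sigma 1 + q^{-24\sigma},
\end{equation*}
which is $\ll q^{24(1-\sigma)}$ for $\sigma \leq 1$. For the dual piece, the trivial bound $|\hat{\Phi}_q|\leq 1$ together with the change of variables $\lambda = q^2 u$ yields
\begin{equation*}
|\hat{Z}_q^{j,+}(\hat{f}_j,\tilde{\phi};1-s)| \ll_{f_j,\sigma} q^{24(1-\sigma)}\int_1^{C'} u^{11-12\sigma}\,du \ll_\sigma q^{24(1-\sigma)}.
\end{equation*}
Summing the two bounds yields the claim.

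The main obstacle is the uniform bound on the inner integral $\int_{G^1/G_\zed}\phi(g^{-1})\sum_x \Phi_q(x)f_j(\lambda g x)\,dg$ over the Siegel domain for $G^1/G_\zed$: the number of nonzero lattice points $x$ with $f_j(\lambda g x) \neq 0$ grows polynomially in the Siegel coordinates of $g$ as $g$ approaches a cusp, and the rapid decay of $\phi = \phi_3 \otimes \phi_2$ in the cusps (via the earlier lemma on $\GL_3$ and exponential decay on $\GL_2$) must be shown to overwhelm this growth uniformly in $\lambda$ and $q$. In the dual integral the same issue arises with $\hat{f}_j$ in place of $f_j$, mitigated by its Schwartz decay, and the lattice-point counting must be tracked carefully enough to retain the explicit $q^{24(1-\sigma)}$ dependence without incurring hidden polynomial losses.
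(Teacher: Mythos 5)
Your overall strategy --- split functional equation, Schwartz decay of $f_j$ and $\hat{f}_j$, Siegel domain analysis for the $G$-integral --- is the same as the paper's, but there is a genuine gap in the quantitative bound for $Z_q^{j,+}$, and the hard step that you flag as ``the main obstacle'' is precisely the content the paper actually supplies, so the argument as written is incomplete.

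The central error is the assertion that the inner integral $\int_{G^1/G_\zed}\phi(g^{-1})\sum_x\Phi_q(x)f_j(\lambda gx)\,dg$ is ``uniformly bounded in $\lambda$,'' leading to the estimate $\int_{1/q^2}^C \lambda^{12\sigma-1}\,d\lambda \ll 1 + q^{-24\sigma}$. It is not uniformly bounded: since $\dim V = 12$ and $f_j$ has compact support, the number of lattice points $x$ with $f_j(\lambda g x)\neq 0$ for $g$ in a fixed compact region is $\asymp \lambda^{-12}$ as $\lambda\downarrow 0$, so the inner integral blows up like $\lambda^{-12}$ precisely on the range $\lambda\in[1/q^2,1]$ that controls the $q$-dependence. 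The paper encodes this via the bound $\sum_{0\neq x\in V(\zed)}\min(1,\|\lambda x\|_2^{-B}) \ll \lambda^{-12}$, giving $\int_{1/q^2}^\infty \lambda^{12\sigma-12}\,\frac{d\lambda}{\lambda} \ll q^{24(1-\sigma)}$ for $\sigma<1$. Your final inequality happens to be weaker than the true bound and so is formally consistent with the claim, but the chain of inequalities leading to it is not valid. (Relatedly, the claim ``every nonzero $x\in V(\zed)$ has $|\Disc(x)|\geq 1$'' is false --- the singular set $L_0$ contains nonzero integer points of discriminant $0$; what saves you is that $f_j$ is supported on the open orbit $V_\bR^{(j)}$, hence away from the singular set.)

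The second issue is that the ``main obstacle'' you defer --- showing that the rapid cuspidal decay of $\phi=\phi_3\otimes\phi_2$ overwhelms the polynomial growth in the number of lattice points near the cusp, uniformly in $\lambda$ and $q$ --- is not an auxiliary check but the heart of the argument. The paper resolves it explicitly by writing $g = c(g)\cdot(y_3,y_2)$ with $c(g)$ ranging over a compact set, reducing to the diagonal $y$-action, and then proving the key lower bound $\max(y_{13},y_{23},y_2,\|y^{-1}\cdot x\|_2) \gg \|x\|_2^\delta$ for some $\delta>0$ and all $\|x\|_2\geq 1$. This, combined with the rapid decay of $\phi$ in $y_{13},y_{23},y_2$ and the Schwartz decay of $f_j$, yields $\phi(g)f_j(g^{-1}\cdot x) \ll_B \|x\|_2^{-B}$ for every $B$, which is the uniform estimate your argument needs and does not establish. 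Your treatment of the dual piece $\hat{Z}_q^{j,+}$ is essentially fine (the substitution $\lambda=q^2u$ pulls out the $q^{24(1-\sigma)}$ factor and the inner integral is bounded uniformly over $u\geq 1$ since the scaling is upward), but the primal piece needs the corrections above.
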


\begin{proof}
 It suffices to prove that the orbital $L$-function extends holomorphically. 
We apply the split functional equation and treat separately $Z_q^{j, +}(f_j,
\phi; s)$ and $\hat{Z}_q^{j,
+}\left(\hat{f}_j, \phi; 1-s\right).$
We use the estimate, for all $B>0$ and all $x \neq 0$,
\begin{equation}\left|f_j(x)\right|, \left|\hat{f}_j(x)\right| \ll_B
\|x\|_2^{-B},\end{equation} which follows since $\hat{f}_j$ is Schwarz class.

Let $\Sigma = \Sigma_{\frac{\sqrt{3}}{2}, \frac{1}{2}}^3 \times
\Sigma_{\frac{\sqrt{3}}{2}, \frac{1}{2}}^2$ be a Siegel set for
$G_\zed\backslash G_\bR/K$, $K = \GO_3(\bR)\times \GO_2(\bR)$. 
As $g = (x_3 y_3 k_3, x_2y_2 k_2)$ varies in $\Sigma$, \begin{equation}c(g) =
(y_3^{-1}x_3 y_3 k_3, y_2^{-1}x_2y_2 k_2)\end{equation} varies in a compact set,
and hence for $x \neq 0$, $\|c(g)^{-1}\cdot x\|_2 \asymp \|x\|_2$. Since $g^{-1}
\cdot x
= c(g)^{-1} \cdot ((y_3, y_2)^{-1}\cdot x)$, it suffices to consider only the
$y$ action.
 Write $y_3 = (y_{13}, y_{23})$.  We have $y_2, y_{13}, y_{23}$ are bounded
below away from 0 in $\Sigma$. Since $(y_3, y_2)$ acts on $x$ by dilations in
the coordinates by positive and negative fractional powers in $y_{13}, y_{23},
y_2$, it follows that for some $\delta > 0$, for $\|x\|_2 \geq 1$,
\begin{equation}
 \max\left(y_{13}, y_{23}, y_2, \left\|y^{-1} \cdot
x\right\|_2\right) \gg
\left\|x\right\|_2^\delta
\end{equation}
and hence by the rapid decay of $\phi$ in the cusps, for any $B>0$,
\begin{equation}
 \phi(g) f_j\left(g^{-1}\cdot x\right) \ll_B
\left\|x\right\|_2^{-B}, \qquad \tilde{\phi}(g) \hat{f}_j\left(g^{-1}\cdot x \right)
\ll_B
\left\|x\right\|_2^{-B}.
\end{equation} 

Note that $x = 0$ may be omitted from the sums defining $Z_q^{j,+}$ and
$\hat{Z}_q^{j,+}$.  In the first case this is because $f_j$ is supported away
from 0.  In the second case this is because the integral of $\phi$ vanishes.

Inserting our estimates with $B>12$, and using that $G^1/G_{\zed}$ has finite
volume and the bounds $\left|\Phi_q(x)\right| \leq 1$,
$\left|\hat{\Phi}_q(x)\right| \leq 1$, yields the bounds
\begin{align}
 \left|Z_q^{j,+}(f_j, \phi; s)\right| &\ll \int_{\frac{1}{q^2}}^{\infty}
\lambda^{12 \sigma} \sum_{0 \neq x \in V(\zed)} \min\left(1, \|\lambda
x\|_2^{-B}\right) \frac{d\lambda}{\lambda}\\
\notag & \ll \int_{\frac{1}{q^2}}^\infty \lambda^{12\sigma - 12}
\frac{d\lambda}{\lambda} \ll q^{24(1-\sigma)}
\end{align}
and
\begin{align}
 \left|\hat{Z}_q^{j,+}\left( \hat{f}, \phi; 1-s\right)\right| &\ll
\int_{q^2}^\infty \lambda^{12-12 \sigma} \sum_{0 \neq x \in V(\zed)}
\min\left(1, \left\|\frac{\lambda x}{q^2}\right\|_2^{-B}\right)
\frac{d\lambda}{\lambda}\\
\notag & \ll q^{24(1-\sigma)}.
\end{align}

\end{proof}

The second main step in this argument estimates the cusp form summed over
reducible maximal pairs $(A,B)$.  In the remainder of the paper, for $(A,B) \in V_\zed^{(j)}$, define
\begin{equation}
\phi((A,B)) = \sum_{g \in G_\bR: g \cdot x_j = (A,B)}\phi(g).
\end{equation}
\begin{proposition}\label{reducible_prop}
 We have the estimate, for each $j = 0, 1, 2$, 
 \begin{equation}
  \sum_{\substack{(A,B) \in V_\zed^{(j),*}/G_\zed \\ 0 < |\Disc(A,B)| < X}}
|\phi((A,B))| \ll X^{\frac{11}{12}+\epsilon},
 \end{equation}
with the $*$ restricting summation to pairs $(A, B)$ which are maximal and
reducible.
\end{proposition}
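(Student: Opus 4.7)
The plan is to exploit the fact that reducible maximal pairs $(A,B) \in V_\zed^{(j),*}$ correspond, via \cite{B04}, to maximal orders in non-field \'etale quartic $\bQ$-algebras. The presence of a nontrivial idempotent $e \in Q \otimes \bQ$ produces a short vector in the canonical embedding of $\sO_Q$ orthogonal to $1$, which forces the shapes $\Lambda_4(Q)$ and $\Lambda_3(C)$ to sit deep in the cusps of $\sS_3$ and $\sS_2$ respectively, where the cusp forms $\phi_3$ and $\phi_2$ decay rapidly by the lemma proved above.

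\medskip

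First I would stratify reducible pairs by the isomorphism class of $Q \otimes \bQ$, the possibilities being $\bQ^4$, $\bQ^2 \times K$, $\bQ \times F$, and $K \times L$ for quadratic fields $K, L$ and a cubic field $F$. In each case I would exhibit a primitive idempotent $e \in Q$ and compute the projection of its canonical embedding orthogonal to $1$, verifying that $|\mathrm{proj}(e)|$ has order $O(1)$ in $\bR^{r_1} \times \bC^{r_2}$ independently of the discriminant (the embedding of $e$ depends only on combinatorial data). After rescaling the projection lattice $P(\sO_Q)$ to unit covolume, one thereby obtains a short vector in $\Lambda_4(Q)$ of length $\asymp |\Disc(Q)|^{-1/6}$. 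A parallel analysis applies to $\Lambda_3(C)$, which similarly acquires a short vector of length $\asymp |\Disc(C)|^{-1/4}$ from the idempotent of $C \otimes \bQ$.

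\medskip

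Next I would translate these short vectors into Iwasawa coordinates. The existence of a vector of length $\ll \epsilon$ in a unit-covolume three-dimensional lattice forces the Siegel-reduced Iwasawa torus coordinates on $\GL_3$ to satisfy $y_1^2 y_2 \gg \epsilon^{-3}$, and in particular $\max(y_1, y_2) \gg \epsilon^{-1}$, and similarly $y \gg \epsilon^{-2}$ on $\GL_2$. With $\epsilon \asymp |\Disc(Q)|^{-1/6}$, the cuspidal decay lemma yields
\begin{equation}
\left| \phi_3(\Lambda_4(Q)) \phi_2(\Lambda_3(C)) \right| \ll_B |\Disc(Q)|^{-B}
\end{equation}
for any $B > 0$. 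A trivial count bounds the number of reducible maximal pairs with $|\Disc| < X$ by $O(X^{1+\epsilon})$, the dominant contribution $O(X \log X)$ coming from the $K \times L$ case. Combined with the arbitrary polynomial saving above (it suffices to take $B$ any positive constant exceeding $1/12$), this yields the stated bound $X^{11/12 + \epsilon}$.

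\medskip

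The main obstacle is the Iwasawa-reduction step: one must verify that after passing to a Minkowski-reduced basis of $P(\sO_Q)$, the idempotent projection genuinely appears as the shortest basis vector, rather than being hidden inside a $\GL_3(\zed)$ translate of a longer basis. In unbalanced regimes such as $|\Disc(K)| \ll |\Disc(L)|$ in the $K \times L$ case, only one of the Iwasawa torus parameters on $\GL_3$ blows up, but the lemma still provides arbitrary polynomial saving in that one parameter by taking the other exponent $n_i$ arbitrarily small. A secondary concern is to avoid double counting those pairs that are reducible in both the common-zero and reducible-resolvent senses, which a straightforward inclusion--exclusion resolves.
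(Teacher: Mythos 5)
Your approach is genuinely different from the paper's. The paper works entirely in the Siegel domain: it invokes Bhargava's Lemmas 12 and 13 from \cite{B05}, which bound the number of reducible reduced representatives with $a_{11} \neq 0$ by $O(X^{11/12+\epsilon})$ (these are handled by the trivial bound $\phi = O(1)$), and then disposes of the remaining $a_{11} = 0$ stratum — which has too many lattice points for fixed Iwasawa torus parameter $y$ — by the rapid decay of $\phi$ in $y$. You instead use the ring-theoretic structure behind the parametrization to produce short vectors intrinsically, which if carried out would give a much stronger bound than $X^{11/12+\epsilon}$.

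However, there is a genuine gap in your classification of reducible pairs. You assert that reducible maximal pairs correspond to maximal orders in \emph{non-field} \'etale quartic algebras, and stratify only into $\bQ^4$, $\bQ^2 \times K$, $\bQ \times F$, $K \times L$. This is not the definition used in the paper (following \cite{B05}): $(A,B)$ is reducible if $A,B$ have a common zero in $\bP^2(\bQ)$ \emph{or} the resolvent binary cubic $f(x,y) = \det(Ax+By)$ is reducible. Totally irreducible maximal pairs correspond precisely to rings of integers of $A_4$ and $S_4$ quartic \emph{fields}; reducible maximal pairs therefore also include the rings of integers of $V_4$, $C_4$, and $D_4$ quartic fields, for which $Q\otimes\bQ$ is a field and $\sO_Q$ has \emph{no} nontrivial idempotent. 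Your argument for forcing $\Lambda_4(Q)$ into the $\GL_3$ cusp collapses in exactly these cases, and they are not small: $D_4$ fields contribute on the order of $X$ classes, so this omission cannot be absorbed into the error term by trivially bounding $\phi$.

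The gap is in principle patchable by the same mechanism applied to the other tensor factor: for $V_4$, $C_4$, $D_4$ fields the resolvent cubic is reducible, so $C\otimes\bQ$ is a non-field \'etale cubic algebra containing a nontrivial idempotent, which gives a vector of length $O(1)$ orthogonal to $1$ and hence, after rescaling to unit covolume, a vector of length $\asymp |\Disc(C)|^{-1/4}$ in $\Lambda_3(C)$. This pushes $\Lambda_3(C)$ into the $\SL_2$ cusp, where $\phi_2$ decays exponentially, and $\phi_3$ is merely bounded on $\Lambda_4(Q)$. Since $\phi((A,B))$ is the product $\phi_3 \phi_2$ evaluated on the associated orbit, this decay suffices. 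You would need to add this case explicitly, and also verify that in the non-field cases you do consider, the count of pairs $(A,B)$ (not merely of algebras) is $O(X^{1+\epsilon})$, which follows from the $O(X\log X)$ total count of integral orbits cited in the paper. As written, though, the proposal proves the bound only over a proper subset of the reducible locus.
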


\begin{proof}
 This builds on the estimates of \cite{B05} starting from p. 1041.  A
point $(A,B)$ in the sum with discriminant between $\frac{X}{2}$ and $X$ has a
representative such that $X^{-\frac{1}{12}} (y_3, y_2)^{-1}(A,B)$ lies within a
compact neighborhood bounded away from 0 for some $(y_3, y_2)$ in the Siegel set
$\Sigma$. Lemmas 12 and 13 of \cite{B05} show that the number of such reducible
points with $a_{11} \neq 0$ is $O(X^{\frac{11}{12}+\epsilon})$.  Meanwhile, if
$a_{11} = 0$ then there are 11 free parameters $a_{ij}$, $b_{ij}$, and hence
the number of such points for fixed $(y_3, y_2)$ is bounded by a polynomial in
$y_3, y_2$ times $X^{\frac{11}{12}}$.  The claim now follows due to the
rapid decay of the cusp forms in the parameters $y_3, y_2$.

\end{proof}

The following is a variant of \cite{B05}, Proposition 23.
\begin{proposition}\label{sifting_prop}
 Let $q$ be square-free and let $N(W_q, X)$ be the number of classes of pairs
$(A,B)$ which are non-maximal at all primes $p|q$ and have $0 < |\Disc((A,B))|
\leq X$.  We have
\begin{equation}
 N(W_q, X) = O\left(\frac{X \log X e^{O(\omega(q))}}{q^2} \right).
\end{equation}

\end{proposition}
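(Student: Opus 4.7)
The plan is to adapt Bhargava's orbit-counting and sieving machinery from \cite{B05} to the squarefree sieving setting. Two ingredients combine: the characterization in Section 2 of non-maximality at a prime $p$ by three explicit congruence conditions on $(A,B)$ modulo $p^2$ (involving vanishing of the $\lambda_{kl}^{ij}$), together with averaging over a fundamental domain for $G_\zed$ acting on $V_\bR$ against the count of lattice points satisfying the congruence.

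First, I would compute the local density at a single prime. For each $p$, each of the three non-maximality conditions requires certain $\lambda_{kl}^{ij}$ to vanish mod $p$ and a smaller subset to vanish mod $p^2$; a direct count inside $V(\zed/p^2\zed) \cong (\zed/p^2\zed)^{12}$ bounds the cardinality of the non-maximality locus by $O(p^{22})$, i.e.\ density $O(1/p^2)$, consistent with the proportion of maximal quartic orders at $p$ computed in \cite{B04}. Since $q$ is squarefree, CRT makes the conditions modulo $p^2$ independent across primes $p \mid q$, so the density of $W_q$ modulo $q^2$ is $O(e^{O(\omega(q))}/q^2)$; the exponential factor absorbs the $3^{\omega(q)}$ choices of case per prime and uniform implicit constants.

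Next, I would apply Bhargava's averaging method to the sifted set. Writing
\begin{equation}
N(W_q, X) = \frac{1}{\vol(G_0)} \int_{G_0} \# \bigl\{(A,B) \in g \cdot \sF \cap V_\zed : (A,B) \in W_q,\ 0 < |\Disc(A,B)| \leq X \bigr\}\, dg
\end{equation}
for a suitable compact $G_0 \subset G_\bR$ and a fundamental domain $\sF$ for $G_\zed$, decompose $\sF$ into a bounded main body and an unbounded cuspidal region. In the main body the count reduces to a weighted lattice-point count in a convex region of volume $O(X)$ restricted to the cosets of $q^2 V_\zed$ prescribed by $W_q \bmod q^2$, and the density estimate above gives a contribution of $O(X \cdot e^{O(\omega(q))}/q^2)$. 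In the cuspidal region, integration over the unbounded height coordinates of $\sF$ introduces a factor of at most $\log X$; Bhargava's argument for cuspidal contributions (exploiting that reducible or non-maximal pairs near the cusp must satisfy additional vanishing of leading coefficients such as $a_{11}$, which thins the relevant lattice) preserves the full $1/q^2$ saving and yields the bound $O(X \log X \cdot e^{O(\omega(q))}/q^2)$.

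The main obstacle is confirming that the sieve saving is retained uniformly in $q$ throughout the cuspidal strata. Bhargava handles this for a single prime by combining the non-maximality congruence with the vanishing constraints that define the cusps of $\sF$; for squarefree $q$ one must verify that the conditions modulo $q^2$ impose the full expected codimension on each cuspidal fiber, so that the $O(e^{O(\omega(q))}/q^2)$ density is inherited after specialization. This multiplicativity across primes (via CRT) of both the congruences and the cuspidal stratification is the key technical point; once established, summing the main-body and cuspidal contributions yields the stated estimate.
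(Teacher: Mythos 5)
Your proposal takes a genuinely different route from the paper, and it has a real gap in the place where the work is hardest.

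The paper's proof is structural rather than geometric. It combines three facts: (i) Yukie's bound that the total number of classes of integral pairs with $0 < |\Disc| \le X$ is $O(X\log X)$; (ii) Nakagawa's bound that a maximal quartic ring has $O_\epsilon(\prod p_i^{(2+\epsilon)\lfloor e_i/4\rfloor})$ subrings of index $k=\prod p_i^{e_i}$, together with the fact that an index-$k$ subring has discriminant scaled by $k^2$; and (iii) the observation that non-maximality at $p$ forces $p$ to divide the index in the unique maximal overring, so the sum over indices supported at the primes dividing $q$ forces $e\ge 1$ at each such prime. Summing $\sum_e p^{(2+\epsilon)\lfloor e/4\rfloor - 2e}$ starting from $e=1$ gives the $1/p^2$ saving per prime, and the content factor $\sum_n \sigma(n)/n^6$ accounts for the multiplicity of cubic resolvents. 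No fundamental domain, no cuspidal stratification, and the $X\log X$ enters as a black box from Yukie.

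Your proposal instead runs a direct geometry-of-numbers sieve: compute the local density of the non-maximality locus in $V(\zed/p^2\zed)$, use CRT for squarefree $q$, then average over translates of a fundamental domain and split into main body and cusp. This is a legitimate strategy in principle, but the decisive step — the claim that ``Bhargava's argument for cuspidal contributions \ldots preserves the full $1/q^2$ saving'' uniformly across all cuspidal strata — is precisely the point where the argument is hardest, and you assert it rather than prove it. The concern is concrete: in a cuspidal stratum, one or more of the coordinates $a_{ij},b_{ij}$ is confined to an interval much shorter than $q^2$, so the congruence condition mod $q^2$ in that coordinate imposes no constraint at all. To retain the $1/q^2$ one must check, stratum by stratum, that the non-maximality congruences on the \emph{surviving} coordinates still carry the full expected codimension, and that the choices of which of Bhargava's three cases applies at each $p\mid q$ do not blow up the implied constant beyond $e^{O(\omega(q))}$. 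You flag this as ``the key technical point \ldots once established,'' but it is not established, and it is not a routine verification — it requires a case analysis parallel to Lemmas 12--13 of \cite{B05} redone with the congruence conditions active. The paper's proof avoids the entire issue by never touching the fundamental domain: the cuspidal difficulty is absorbed into Yukie's unconditional $O(X\log X)$, and the $1/q^2$ is extracted from Nakagawa's purely local subring count rather than from a lattice-point density. If you want to pursue the geometry-of-numbers route you would need to supply the cuspidal estimate in full; as written, the proposal does not close the argument.
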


\begin{proof}
It follows from \cite{Y93} that the number of equivalence classes of integral
pairs $(A,B)$ with discriminant of size between 1 and $X$ is $O(X \log X)$. 

 Nakagawa \cite{N96} verifies that the number of index $k = \prod p_i^{e_i}$
subrings of a maximal quartic ring $Q$ is 
\begin{equation}
 O_\epsilon\left(\prod_i p_i^{(2+\epsilon)\lfloor e_i/4\rfloor} \right).
\end{equation}
Note that the argument of \cite{N96} treats $Q$ which is the ring of integers
of a quartic number field, but as the count may be checked locally, the bound
applies to maximal reducible $Q$ as well, i.e. because there are rings of
integers which satisfy any finite list of local constraints which are maximal.
The discriminant of an index $k$ subring of $Q$ is $k^2 \Disc(Q)$. 

The argument now proceeds essentially as in \cite{B05}. A quartic ring $Q$, of
content $n$, $Q = \zed + nQ'$ has $\sigma(n) = \sum_{d|n}d$ cubic resolvent
rings and discriminant $\Disc(Q) = n^6 \Disc(Q')$.
It follows that
\begin{align}
 N(W_q, X) &< \sum_{n=1}^\infty \frac{\sigma(n)}{n^6} \prod_{p|q} \left(\sum_{e
=1}^\infty \frac{p^{(2+\epsilon) \lfloor e/4\rfloor}}{p^{2e}} \right)
\prod_{p\nmid q}\left(\sum_{e=0}^\infty \frac{p^{(2+\epsilon) \lfloor
e/4\rfloor}}{p^{2e}} \right)O(X \log X) \\&\notag= O\left(\frac{X \log X
e^{O(\omega(q))}}{q^2} \right).
\end{align}

\end{proof}
\begin{proof}[Proof of Theorem \ref{main_theorem}]
Consider the sum
\begin{equation}
N_j(X; \psi, \phi) = \sum_{\substack{(A,B) \in V_\zed^{(j)}/G_\zed \\ \text{maximal}}} \psi\left(\frac{|\Disc(A,B)|}{X} \right)\frac{\phi((A,B))}{|\Stab_{G_\zed}((A,B))|}.
\end{equation}
By Proposition \ref{reducible_prop} (recall that the stabilizer of an irreducible element is trivial)
\begin{equation}
N_j(X; \psi, \phi) = \sum_{\substack{(A,B) \in V_\zed^{(j)}/G_\zed \\ \text{maximal, irreducible}}} \psi\left(\frac{|\Disc(A,B)|}{X} \right)\phi((A,B)) + O\left(X^{\frac{11}{12}+\epsilon} \right).
\end{equation}
The latter sum is over maximal orders in $A_4$ or $S_4$ quartic fields.  The number of $A_4$ fields with discriminant of size at most $X$ is $O\left(X^{\frac{7}{8}+\epsilon}\right)$ (\cite{B80}, \cite{W99}) so it suffices to prove the estimate for $N_j(X;\psi, \phi)$.

Let $\delta>0$ be a small parameter and let $Q = X^{\delta}$. By inclusion-exclusion,
\begin{align}
N_j(X;\psi, \phi) &= \sum_{q < Q} \mu(q) \sum_{\substack{(A,B) \in V_\zed^{(j)}/G_\zed}}\Phi_q((A,B)) \psi\left(\frac{|\Disc(A,B)|}{X} \right)\frac{\phi((A,B))}{|\Stab_{G_\zed}((A,B))|} \\ &\notag + \sum_{q \geq Q} \mu(q)\sum_{\substack{(A,B) \in V_\zed^{(j)}/G_\zed}}\Phi_q((A,B)) \psi\left(\frac{|\Disc(A,B)|}{X} \right)\frac{\phi((A,B))}{|\Stab_{G_\zed}((A,B))|}
\end{align}
The second sum is $O\left(\frac{X\log X}{Q^{1-\epsilon}}\right)$ by Proposition \ref{sifting_prop}.  By Mellin inversion, the first sum may be expressed
\begin{equation}
\sum_{q < Q} \mu(q) \oint_{\Re(s) = 2} \tilde{\psi}(s)X^s \sL_q^j(\phi,s)ds.
\end{equation}
Let $A>0$ be a parameter, and choose test function $f$ supported on $V_\zed^{(j)}$ such that $\Lambda_{f_G, \phi} \neq 0$ and such that $\tilde{f}_D(s) \neq 0$ in $\Re(s) > -A$ and satisfies $\frac{\tilde{\psi}(s)}{\tilde{f}_D(s)}\ll \frac{1}{|s|^2}$ in $-A < \sigma < 3$.  In the integral, replace
\begin{equation}
\sL_q^j(\phi,s) = \frac{12 Z_q^j(f_j,\phi;s)}{\Lambda_{f_G,\phi} \tilde{f}_D(s)}
.
\end{equation}

Shift the integral to $\Re(s) = -A$ and bound $|Z_q^{j}(f_j,\phi;s)| \ll_A
q^{24(1+A)}$, uniformly in $s$.  This gives a bound for the first sum of
$X^{-A}Q^{24A + 1}$.  Letting $A$ tend to $\infty$ we find we may take $\delta$ arbitrarily close to $\frac{1}{24}$ and obtain the estimate $O(X^{1-\frac{1}{24}+\epsilon})$.

\end{proof}
\bibliographystyle{plain}

\end{document}